\newcommand{\der}[2]{\frac{d \, #1}{d\, #2} }
\newcommand{\pder}[2]{\frac{\partial \, #1}{\partial \, #2} }
\newcommand{\be}[1]{\begin{equation}\label{#1}}
\newcommand{\ee}{\end{equation}}
\newcommand{\map}[3]{#1 \, : \, #2 \to #3}
\newcommand{\eq}[1]{$(\protect\ref{#1})$}
\newcommand{\restr}[2]{\left. #1 \right|_{#2}}
\renewcommand{\Vec}{\operatorname{Vec}\nolimits}
\newcommand{\Id}{\operatorname{Id}\nolimits}
\newcommand{\ad}{\operatorname{ad}\nolimits}
\newcommand{\Ad}{\operatorname{Ad}\nolimits}
\newcommand{\tr}{\operatorname{tr}\nolimits}
\def\ds{\displaystyle}
\def\R{{\mathbb R}}
\def\N{{\mathbb N}}
\def\CL{\mathcal{L}}
\def\D{\Delta}
\def\d{\delta}
\def\lam{\lambda}
\def\p{\psi}
\def\th{\theta}
\def\g{\gamma}
\def\vH{\vec H}
\def\dlam{\dot \lambda}
\def\dq{\dot q}
\def\then{\Rightarrow}
\newcommand{\const}{\operatorname{const}\nolimits}
\newcommand{\spann}{\operatorname{span}\nolimits}
\newcommand{\Lie}{\operatorname{Lie}\nolimits}
\def\sspan{\operatorname{span}}
\newcommand{\hall}{\operatorname{Hall}\nolimits}
\newcommand{\rank}{\mathrm{rank}}
\def\tcut{t_{\operatorname{cut}}}
\def\tconj{t^1_{\operatorname{conj}}}
\newtheorem{theorem}{Theorem}
\newtheorem{lemma}{Lemma}
\newtheorem{corollary}{Corollary}
\newtheorem{proposition}{Proposition}
\newtheorem{conjecture}{Conjecture}
\theoremstyle{remark}
\def\sR{sub-Rie\-man\-ni\-an }
\newcommand{\onefiglabelsizen}[4]
{
\begin{figure}[htbp]
\begin{center}
\includegraphics[height=#4cm]{#1}
\\
\parbox[t]{0.7\textwidth}{\caption{#2}\label{#3}}
\end{center}
\end{figure}
}
\newcommand{\twofiglabelsize}[8]
{
\begin{figure}[htbp]
\includegraphics[width=#7\textwidth]{#1}
\hfill
\includegraphics[width=#8\textwidth]{#4}
\\
\parbox[t]{0.4\textwidth}{\caption{#2}\label{#3}}
\hfill
\parbox[t]{0.4\textwidth}{\caption{#5}\label{#6}}
\end{figure}
}
\title{Sub-Riemannian geodesics on  the free Carnot group with the growth vector $(2,3,5,8)$%
\footnote{Work supported by 
Grant of the Russian Federation for the State Support of Researches
(Agreement  No~14.B25.31.0029).}}
\author{
Yuri Sachkov\\
Program Systems Institute\\
Russian Academy of Sciences\\
Pereslavl-Zalessky,  Russia\\
E-mail: sachkov@sys.botik.ru}
\begin{document}

\maketitle

\bigskip
\centerline{\em To Lena, for the birthday}
\bigskip

\begin{abstract}
 We consider the free nilpotent Lie algebra
$L$  with 2 generators, of step~4, and the corresponding connected simply connected Lie group $G$. We study the left-invariant sub-Riemannian structure on $G$ defined by the generators of $L$ as an orthonormal frame.

We compute two vector field models of $L$ by polynomial vector fields in~$\R^8$, and find an infinitesimal symmetry of the   \sR structure. Further,
  we compute explicitly  
the product rule in~$G$, 
the right-invariant frame on $G$, 
linear on fibers Hamiltonians corresponding to the left-invariant and right-invariant frames  on $G$,  
Casimir functions and co-adjoint orbits on  $L^*$.

Via Pontryagin maximum principle,  we describe abnormal extremals and derive a Hamiltonian system $\dlam = \vH(\lam)$, $\lam \in T^*G$, for normal extremals. 
 We compute 10 independent integrals of $\vH$, of which only 7 are in involution.  After reduction by 4 Casimir functions, the vertical subsystem of $\vH$ on $L^*$  shows numerically a chaotic dynamics, which leads to a conjecture  on non-integrability of~$\vH$ in the Liouville sense.
\end{abstract}

\section{Introduction}\label{sec:intro}
In this work we study a variational problem that can be stated equivalently in the following three ways.

\bigskip
{\em (1) Geometric statement.}
Consider two points $a_0, a_1 \in \R^2$ connected by a smooth curve $\g_0 \subset \R^2$. Fix arbitrary data $ S \in \R$, $c = (c_x, c_y) \in \R^2$, $M = (M_{xx}, M_{xy}, M_{yy}) \in \R^3$. The problem is to connect the points $a_0$, $a_1$ by the shortest smooth curve $\g\subset \R^2$ such that the domain $D \subset \R^2$ bounded by $\g_0 \cup \g$ satisfy the following properties:
\begin{enumerate}
\item
$\textrm{area}(D) = S$,
\item
$\textrm{center of mass}(D)=c$,
\item
$\textrm{second order moments}(D) = M$.
\end{enumerate}

\bigskip
{\em (2) Algebraic statement.}
Let $L$ be the free nilpotent Lie algebra with two generators $X_1$, $X_2$ of step 4:
\begin{align}
L &= \spann(X_1, \dots, X_8) \label{L}, \\
[X_1, X_2] &= X_3, \label{X12i}\\
[X_1, X_3] &= X_4, \quad  [X_2, X_3] = X_5, \label{X13i}\\
[X_1, X_4] &= X_6, \quad [X_1, X_5] = [X_2, X_4] = X_7, \quad [X_2, X_5]= X_8. \label{X14i}
\end{align}
Let $G$ be the connected simply connected Lie group with the Lie algebra $L$, we consider $X_1$, \dots, $X_8$ as a frame of left-invariant vector fields on $G$.
Consider the left-invariant sub-Riemannian structure $(G, \D, g)$ defined by $X_1$, $X_2$ as an orthonormal frame:
$$
\D_q = \spann(X_1(q), X_2(q)), \qquad g(X_i, X_j) = \d_{ij}.
$$
The problem is to find sub-Riemannian length minimizers that connect two given points $q_0, q_1 \in G$:
\begin{align*}
&q(t) \in G, \qquad q(0) = q_0, \quad q(t_1) = q_1, \\
&\dq(t) \in \D_{q(t)}, \\
&l = \int_0^{t_1} \sqrt{g(\dq, \dq)} \, dt \to \min.
\end{align*}

\bigskip
{\em (3) Optimal control  statement.}
Let vector fields $X_1, X_2 \in \Vec(\R^8)$ be defined by~\eq{X1}, \eq{X2}. Given arbitrary points $q_0, q_1 \in \R^8$, it is required to find solutions of the optimal control problem
\begin{align}
&\dot q = u_1 X_1(q) + u_2 X_2(q), \qquad q \in \R^8, \quad (u_1,u_2) \in \R^2, \label{sys}\\
&q(0) = q_0, \qquad q(t_1) = q_1, \label{bound}\\
&J = \frac 12 \int_0^{t_1} (u_1^2 + u_2^2) \, dt \to \min. \label{J}
\end{align}

\bigskip
The problem stated will be called the nilpotent sub-Riemannian problem with the growth vector $(2,3,5,8)$, or just the $(2,3,5,8)$-problem. 
There are several important motivations for the study of this problem:
\begin{itemize}
\item
this problem is a nilpotent approximation of a general sub-Riemannian  problem with the growth vector (2,3,5,8)~\cite{gromov, mitchell, bellaiche, agrachev_sarychev, mont},
\item
this problem is a natural continuation of the basic sub-Riemannian (SR) problems: the nilpotent SR problem on the Heisenberg group (aka Dido's problem, growth vector (2,3))~\cite{brock, versh_gersh}, and the nilpotent SR problem on the Cartan group (aka generalized Dido's problem, growth vector (2,3,5))~\cite{dido_exp, max1, max2, max3},
\item
this problem is included into a natural infinite chain of rank 2 SR problems with the  free nilpotent Lie algebras of step $r$, $r \in \N$, and more generally into a natural 2-dimensional lattice of rank $d$ SR problems with   the  free nilpotent Lie algebras of step $r$, $(d,r) \in \N^2$,
\item
this problem is the simplest possible SR problem on a step 4 Carnot group, and it is the first SR problem with growth vector of length 4 that should be studied.
\end{itemize}

To the best of our knowledge, this is the first study of the (2,3,5,8)-problem (although, it was mentioned in~\cite{monti} as a SR problem with smooth abnormal minimizers). 

The structure of this work is as follows. 

In Sec.~\ref{sec:realis} we construct   
two models (``asymmetric'' and ``symmetric'') of the free nilpotent Lie algebra with 2 generators of step 4 by polynomial vector fields in $\R^8$. For these models, we use respectively an algorithm due to Grayson and Grossman~\cite{grayson_grossman1} and an original approach. In the symmetric model, a one-parameter group of symmetries leaving the initial point fixed is found. 

In Sec.~\ref{sec:group} we describe explicitly the product rule in the Lie group $G \cong \R^8$, construct a right-invariant frame on $G$ corresponding naturally to the left-invariant frame given by $X_1$, $X_2$ and their iterated Lie brackets, compute the corresponding left-invariant and right-invariant Hamiltonians that are linear on fibers of $T^*G$, describe Casimir functions and co-adjoint orbits in the dual space $L^*$ of the Lie algebra $L$.

In Sec.~\ref{sec:PMP} we apply Pontryagin maximum principle to the (2,3,5,8)-problem: we describe abnormal extremals and derive a Hamiltonian system $\dlam = \vH(\lam)$, $\lam \in T^*G$, for normal extremals. 

In Sec.~\ref{sec:normal}
we study integrability of the  normal Hamiltonian field $\vH$. We compute 10 independent integrals of $\vH$, of which only 7 are in involution.  After reduction by 4 Casimir functions, the vertical subsystem of $\vH$ on $L^*$ shows numerically a chaotic dynamics, which leads to a conjecture  on non-integrability of~$\vH$.

In Sec.~\ref{sec:concl} we suggest possible questions for further study.


\section{Realisation by polynomial vector fields in $\R^8$}\label{sec:realis}
In this section we construct two models of the free nilpotent Lie algebra $L$\eq{L}--\eq{X14i} by polynomial vector fields in $\R^8$.

\subsection{Free nilpotent Lie algebras}

Let $\CL_d$ be the real free Lie algebra with $d$ generators~\cite{reutenauer}; $\CL_d$ is the Lie algebra of commutators  of $d$ variables. We have $\CL_d=\oplus^\infty_{i=1}\CL_d^i$, where $\CL^i_d$  is the space of  commutator polynomials of degree $i$. Then 
$\CL^{(r)}_d := \CL_d / \oplus^\infty_{i=r+1}\CL_d^i  $
is the free nilpotent  Lie algebra with $d$ generators of step $r$. 

Denote
$l_d(i):=\dim \CL^i_d$, $l^{(r)}_d := \dim \CL^{(r)}_d = \sum^r_{i=1}l_d(i)$.
The classical   expression of $l_d(i)$ is 
$il_d(i)=d^i - \sum_{j|i,\ 1 \leq j<i} jl_d(j)$.

In this  work we are interested  in free nilpotent Lie algebras with $2$ generators. Dimensions of such Lie algebras for small step are given in Table~1.   
\begin{table}[htbp]
\label{tab:dim}
\begin{center}
\begin{tabular}{|c|r|r|r|r|r|r|r|r|r|r|}
\hline
$i $&      1 & 2 & 3  & 4 & 5& 6& 7&   8& 9& 10\\
\hline
$l_2(i)$ & 2 & 1 & 2 & 3 &  6& 9& 18& 30& 56& 99\\
\hline
$l_2^{(i)}$ & 2 & 3 & 5 & 8 & 14&23& 41& 71&127&226\\
\hline
\end{tabular}
\end{center}
\caption{Dimensions of free nilpotent Lie algebras $\CL^{(i)}_2$}
\end{table}

\subsection{Carnot algebras and groups}

A Lie algebra $L$ is called a Carnot algebra if it admits a decomposition
$L= \oplus^r_{i=1} L_i$
as a vector space, such that
$[L_i, L_j] \subset L_{i+ j}$, 
$L_s = {0} \text{ for } s>r$,
$L_{i+1}=[L_1, L_i]$.

A free nilpotent Lie algebra $\CL^{(r)}_d$ is a Carnot algebra with the homogeneous components $L_i=\CL^i_d.$

A Carnot group $G$ is a connected, simply connected Lie group whose Lie algebra $L$ is a Carnot algebra. If $L$ is realized as the Lie algebra of left-invariant vector fields on $G$, then the degree $1$ component $L_1$ can be thought of as a completely nonholonomic (bracket-generating) distribution  on $G$. If moreover $L_1$ is endowed with a left-invariant inner product
$g$, then ($G, L_1, g $)  becomes a nilpotent left-invariant \sR manifold~\cite{mont}. Such \sR  structures  are nilpotent approximations of generic \sR  structures~\cite{gromov, mitchell, bellaiche, agrachev_sarychev}.

The sequence of numbers
$$ \left( \dim L_1, \dim L_1+\dim L_2, \dots, \dim L_1 + \dots + \dim L_r = \dim L \right)$$
is called the growth vector of the distribution $ L_1$~\cite{versh_gersh}.

For free nilpotent Lie algebras, the growth vector is maximal compared with all Carnot algebras with the bidimension $(\dim L_1, \dim L)$.

\subsection{Lie algebra with the growth vector $(2, 3, 5, 8)$}\label{subsec:algebra}

The Carnot algebra with the growth vector (2, 3, 5, 8) 
$$\CL^{(4)}_2 = \sspan (X_1, \dots, X_8)$$
is determined by the following  multiplication table:
\begin{align}
[X_1, X_2] &= X_3, \label{X1X2}\\
[X_1, X_3] &= X_4, \quad  [X_2, X_3] = X_5, \label{X1X3}\\
[X_1, X_4] &= X_6, \quad [X_1, X_5] = [X_2, X_4] = X_7, \quad [X_2, X_5]= X_8, \label{X1X5}
\end{align}
 with all the rest brackets equal to zero. 
 This multiplication table is depicted at Fig.~\ref{fig:2358}.

\begin{figure}[htb]
\setlength{\unitlength}{1cm}

\begin{center}
\begin{picture}(4, 4)
\put(1.15, 3.9){ \vector(1, -1){0.8}}
\put(1, 3.9){ \vector(0, -1){2}}
\put(2.85, 3.9){ \vector(-1, -1){0.8}}
\put(3, 3.9){ \vector(0, -1){2}}
\put(1.9, 2.65){ \vector(-1, -1){0.8}}
\put(2.1, 2.65){ \vector(1, -1){0.8}}
\put(0.95, 3.9){ \vector(-1, -4){0.8}}
\put(1, 1.45){ \vector(-1, -1){0.8}}
\put(3.1, 3.9){ \vector(1, -4){0.8}}
\put(3.05, 1.45){ \vector(1, -1){0.8}}
\put(1.05, 1.45){ \vector(1, -1){0.8}}
\put(2.9, 3.9){ \vector(-1, -4){0.8}}
\thicklines
\put(2.95, 1.45){ \vector(-1, -1){0.8}}
\put(1.1, 3.9){ \vector(1, -4){0.8}}


\put(1, 1.5) {$X_4$}
\put(3, 1.5) {$X_5$}
\put(1, 3.98) {$X_1$}
\put(3, 3.98) {$X_2$}
\put(2, 2.75) {$X_3$}
\put(0, 0.25) {$X_6$}
\put(2, 0.25) {$X_7$}
\put(4, 0.25) {$X_8$}

\end{picture}

\caption{Lie algebra with the growth vector $(2, 3, 5, 8)$\label{fig:2358}}
\end{center}
\end{figure}
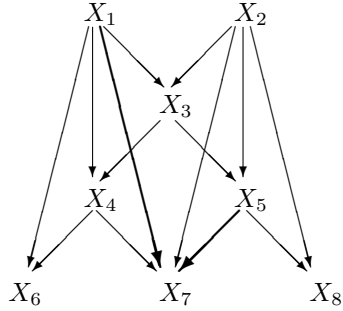


\subsection{Hall basis}

Free nilpotent Lie algebras have a convenient basis introduced by M. Hall~\cite{hall}.
We describe it using the exposition of~\cite{grayson_grossman1}.

The Hall basis of the free Lie algebra $\CL_d$ with $d$ generators $X_1$, \dots, $X_d$
is the subset $\hall \subset \CL_d$ that has a decomposition into homogeneous components
$
\hall = \cup_{i=1}^{\infty} \hall_i
$
defined as follows.

Each element $H_j$, $j = 1, 2, \dots$, of the Hall basis  is a monomial in the generators $X_i$ and is defined recursively as follows. The generators satisfy the inclusion
$
X_i \in \hall_1$, $i = 1, \dots, d$,
and we denote
$
H_i = X_i$, 
$i = 1, \dots, d$.
If we have defined basis elements 
$
H_1, \dots, H_{N_{p-1}} \in \oplus_{j=1}^{p-1} \hall_j$,
they are simply ordered so that $E < F$ if $E \in \hall_k$, $F \in \hall_l$, $k < l$:
$
H_1 < H_2 < \dots < H_{N_{p-1}}$.
Also if $E \in \hall_s$, $F \in \hall_t$ and $p = s +t$, then
$
[E,F] \in \hall_p
$
if:
\begin{enumerate}
\item
$E > F$, and
\item
if $E = [G,K]$, then $K \in \hall_q$ and $t \geq q$.
\end{enumerate}

By this definition, one easily computes recursively the first components $\hall_i$ 
of the Hall basis
for $d = 2$:
\begin{align*}
&\hall_1 = \{H_1, H_2\}, \qquad H_1 = X_1, \quad H_2 = X_2, \\
&\hall_2 = \{H_3\}, \qquad H_3 = [X_2, X_1], \\
&\hall_3 = \{H_4, H_5\}, \qquad H_4 = [[X_2, X_1],X_1], \quad  H_5 = [[X_2, X_1],X_2], \\
&\hall_4 = \{H_6, H_7, H_8\}, \\ 
&H_6 = [[[X_2, X_1],X_1],X_1], \  H_7 = [[[X_2, X_1],X_1],X_2], \  H_8 = [[[X_2, X_1],X_2],X_2].
\end{align*}
Consequently,
$
\CL_2^{(4)} = \spann\{H_1, \dots, H_8\}$.
In the sequel we use a more convenient basis of
$
\CL_2^{(4)} = \spann\{X_1, \dots, X_8\}
$
with the multiplication table~\eq{X1X2}--\eq{X1X5}.

\subsection{Asymmetric vector field model for $\CL_2^{(4)}$}
Here we recall an algorithm for construction of a vector field model for the Lie algebra $\CL_2^{(r)}$ due to Grayson and Grossman~\cite{grayson_grossman1}.
For a given $r \geq 1$, the algorithm evaluates two polynomial vector fields $H_1, H_2 \in \Vec(\R^N)$, $N = \dim \CL_2^{(r)}$, which generate the Lie algebra $\CL_2^{(r)}$. 

Consider the Hall basis elements
$
\spann\{H_1, \dots, H_N\} = \CL_2^{(r)}$.
Each element $H_i \in \hall_j$ is a Lie bracket of length $j$:
\begin{align*}
&H_i = [\dots[[H_2, H_{k_j}],H_{k_{j-1}}], \dots, H_{k_1}],\\
&k_j = 1, \qquad k_{n+1} \leq k_n \text{ for } 1 \leq n \leq j-1.
\end{align*} 
This defines a partial ordering of the basis elements. We say that $H_i$ is a direct descendant of $H_2$ and of each $H_{k_l}$ and write $i \succ 2$, $i \succ k_l$, $l = 1, \dots, j$.

Define monomials $P_{2,k}$ in $x_1$, \dots, $x_N$ inductively by 
$$
P_{2,k} = - x_j \ P_{2,i} /(\deg_j P_{2,i} + 1),
$$
  whenever $H_k = [H_i,H_j]$ is a basis Hall element, and where  $\deg_j P$ is the highest power of $x_j$ which divides $P$. 

The following theorem gives the properties of the generators.

\begin{theorem}[Th. 3.1 \cite{grayson_grossman1}]\label{th:Hall}
Let $r \geq 1$ and let $N = \dim  \CL_2^{(r)}$. Then the vector fields
$
H_1 = \ds\pder{}{x_1}$, $H_2 = \ds\pder{}{x_2} + \sum_{i \succ 2} P_{2,i} \pder{}{x_i}
$
have the following properties:
\begin{enumerate}
\item
they are homogeneous of weight one with respect to the grading
$$
\R^N = \hall_1 \oplus \dots \oplus \hall_r;
$$
\item
$\Lie(H_1, H_2) = \CL_2^{(r)}$.
\end{enumerate}
\end{theorem}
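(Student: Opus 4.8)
The plan is to verify the two claimed properties directly from the explicit construction of the monomials $P_{2,k}$ and the grading, following the strategy of Grayson--Grossman. Since this is Theorem 3.1 of \cite{grayson_grossman1}, the argument is essentially a bookkeeping induction on the step $r$; I would organize it around the weighted-degree structure.

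First I would make the grading precise: assign to the coordinate $x_i$ the weight $j$ whenever $H_i \in \hall_j$, so that a monomial $x_1^{a_1}\cdots x_N^{a_N}$ has weight $\sum_j a_j \operatorname{wt}(x_j)$, and a vector field $P \,\partial/\partial x_i$ has weight $\operatorname{wt}(P) - \operatorname{wt}(x_i)$. The field $H_1 = \partial/\partial x_1$ has weight $-1 = 0 - 1$, hence weight one in the sense that the dilation $\delta_t$ (scaling $x_j$ by $t^{\operatorname{wt}(x_j)}$) satisfies $(\delta_t)_* H_1 = t^{-1} H_1$; the paper's ``weight one'' means exactly this homogeneity. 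For $H_2$ I would prove by induction on the length $j$ of the Hall bracket $H_k = [H_i, H_j]$ that $\operatorname{wt}(P_{2,k}) = \operatorname{wt}(x_k) - 1$: the base case is $P_{2,2} = 1$ with $\operatorname{wt} = 0 = 1-1$, and the recursion $P_{2,k} = -x_j P_{2,i}/(\deg_j P_{2,i}+1)$ multiplies by $x_j$ (whose weight is $\operatorname{wt}(x_k) - \operatorname{wt}(x_i)$, since weights add under bracketing in a Carnot algebra) while using $\operatorname{wt}(P_{2,i}) = \operatorname{wt}(x_i) - 1$ by the inductive hypothesis; adding gives $\operatorname{wt}(P_{2,k}) = \operatorname{wt}(x_i) - 1 + \operatorname{wt}(x_k) - \operatorname{wt}(x_i) = \operatorname{wt}(x_k) - 1$. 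Each term $P_{2,k}\,\partial/\partial x_k$ then has weight $-1$, so $H_2$ is $\delta_t$-homogeneous of weight $-1$, proving (1).

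For (2) I would compute the Lie brackets of $H_1$ and $H_2$ and match them against the Hall basis. The key identity to establish, again by induction on bracket length, is that for each Hall element $H_k = [H_i, H_j]$ one has $[\,\widehat{H_i}, \widehat{H_j}\,] = \widehat{H_k} + (\text{terms along }\partial/\partial x_m,\ m\succ k \text{ with strictly larger weight})$, where $\widehat{H}$ denotes the vector-field realization; the point of the normalization constant $1/(\deg_j P_{2,i}+1)$ is precisely to make the coefficient of $\partial/\partial x_k$ in this bracket equal to exactly $x$-free $1$ plus higher-order corrections at the origin, so that when one evaluates all iterated brackets at $0$ the matrix expressing $\widehat{H_1},\dots,\widehat{H_N}$ in terms of $\partial/\partial x_1,\dots,\partial/\partial x_N$ is unitriangular, hence invertible. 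Combined with the fact that $\delta_t$-homogeneity forces brackets of weight $< -r$ to vanish (there are no coordinates of weight $> r$), this shows $\Lie(\widehat{H_1},\widehat{H_2})$ is an $N$-dimensional nilpotent Lie algebra of step $r$ generated by two elements, which by the universal property of $\CL_2^{(r)}$ must be isomorphic to it.

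The main obstacle is the inductive step in (2): keeping track of which ``lower'' terms (the part of $\widehat{H_2}$ supported on $\partial/\partial x_m$ for $m$ a proper descendant of the indices involved) contribute to a given bracket, and checking that the combinatorics of the Hall ordering — conditions (1) $E>F$ and (2) $E=[G,K] \Rightarrow t\geq q$ in the definition of the basis — guarantee that no spurious basis direction is produced and that the normalization constant does what is claimed. This is where the Grayson--Grossman argument does its real work; since the statement is quoted verbatim as their Theorem 3.1, I would cite it and only sketch this induction rather than reproduce it in full, verifying it explicitly in the case $r=4$, $N=8$ relevant to this paper by direct computation of the eight vector fields and their brackets.
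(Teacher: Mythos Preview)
The paper does not prove this theorem: it is stated as a quotation of Theorem~3.1 of Grayson--Grossman~\cite{grayson_grossman1} and used as a black box, with no accompanying \texttt{proof} environment. Your proposal therefore cannot be compared to the paper's own argument, because there is none; you anticipated this correctly when you wrote that you would ``cite it and only sketch this induction\ldots verifying it explicitly in the case $r=4$, $N=8$''. That is exactly what the paper does---it cites the result and then writes out the eight vector fields $H_1,\dots,H_8$ and their bracket table~\eqref{H2H1}--\eqref{H4H1} for the specific case at hand.

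As for the sketch itself, your weight computation for~(1) is clean and correct. For~(2), your unitriangularity argument is a legitimate route, though note that in the Grayson--Grossman construction the brackets $[\widehat{H_i},\widehat{H_j}]$ actually equal $\widehat{H_k}$ on the nose (no higher-weight correction terms), not merely modulo such terms; this is the whole point of the normalization constant $1/(\deg_j P_{2,i}+1)$, and it makes the induction cleaner than you suggest. Either way, since the paper treats the theorem as an imported result, a citation plus the explicit $r=4$ verification is all that is required here.
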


The algorithm described before Theorem~\ref{th:Hall} produces the following vector
field basis of $\CL_2^{(4)}$:
\begin{align*}
    H_1 &= \frac{\partial }{\partial x_1},\\
    H_2 &= \frac{\partial }{\partial x_2} - x_1\frac{\partial }{\partial x_3} - \frac{x_1^2}{2}\frac{\partial }{\partial x_4}
         - x_1x_2\frac{\partial }{\partial x_5} +  \frac{x_1^3}{6}\frac{\partial }{\partial x_6} + \frac{x_1^2x_2}{2}\frac{\partial }{\partial x_7}
         + \frac{x_1x_2^2}{2}\frac{\partial }{\partial x_8},\\
    H_3 &= \frac{\partial }{\partial x_3} + x_1\frac{\partial }{\partial x_4} + x_2\frac{\partial }{\partial x_5}
         - \frac{x_1^2}{2}\frac{\partial }{\partial x_6} - x_1x_2\frac{\partial }{\partial x_7}
         - \frac{x_2^2}{2}\frac{\partial }{\partial x_8},\\
    H_4 &= -\frac{\partial }{\partial x_4} + x_1\frac{\partial }{\partial x_6} + x_2\frac{\partial }{\partial x_7},\\
    H_5 &= -\frac{\partial }{\partial x_5} + x_1\frac{\partial }{\partial x_7} + x_2\frac{\partial }{\partial x_8},\\
    H_6 &= -\frac{\partial }{\partial x_6},\\
    H_7 &= -\frac{\partial }{\partial x_7},\\
    H_8 &= -\frac{\partial }{\partial x_8},
\end{align*}
with the multiplication table
\begin{align}
    \left[ H_2, H_1 \right] &= H_3, \label{H2H1}\\
    \left[ H_3, H_1 \right] &= H_4, \; \left[ H_3, H_2 \right] = H_5, \label{H3H1} \\
    \left[ H_4, H_1 \right] &= H_6, \; \left[ H_4, H_2 \right] = H_7, \; 
    \left[ H_5, H_2 \right] = H_8. \label{H4H1}
\end{align}

\subsection{Symmetric vector field model of $\CL_2^{(4)}$}\label{subsec:symmetr}
The vector field model of the Lie algebra $\CL_2^{(4)}$ via the fields $H_1,\dotsc, H_8$
obtained in the previous subsection is asymmetric in the sense that
there is no visible symmetry between the vector fields $H_1$ and $H_2$.
Moreover, no continuous symmetries of the \sR structure
generated by the orthonormal frame $\left\{ H_1, H_2 \right\}$ are visible,
although the Lie brackets \eqref{H2H1}--\eqref{H4H1} suggest that this 
sub-Riemannian structure should be preserved by a one-parameter group of
rotations in the plane $\sspan\{H_1, H_2\}$.

One can find a symmetric vector field model of $\CL_2^{(4)}$ free of such
shortages as in the following statement.

\begin{theorem} \label{th:Xi}
    \begin{itemize}
        \item[$(1)$] 
            The vector fields
\begin{align}
&X_1 = \pder{}{x_1} - \frac{x_2}{2} \pder{}{x_3} - \frac{x_1^2 + x_2^2}{2} \pder{}{x_5} - \frac{x_1 x_2^2}{4} \pder{}{x_7} - \frac{x_2^3}{6} \pder{}{x_8}, \label{X1} \\
&X_2 = \pder{}{x_2} + \frac{x_1}{2} \pder{}{x_3} + \frac{x_1^2 + x_2^2}{2} \pder{}{x_4} + \frac{x_1^3}{6} \pder{}{x_6} + \frac{x_1^2 x_2}{4} \pder{}{x_7}, \label{X2}\\
&X_3 = \pder{}{x_3} + x_1 \pder{}{x_4} + x_2 \pder{}{x_5} + \frac{x_1^2}{2} \pder{}{x_6} + x_1 x_2 \pder{}{x_7} + \frac{x_2^2}{2} \pder{}{x_8}, \label{X3}\\
&X_4 = \pder{}{x_4} + x_1 \pder{}{x_6} + x_2 \pder{}{x_7}, \label{X4}\\
&X_5 = \pder{}{x_5} + x_1 \pder{}{x_7} + x_2 \pder{}{x_8}, \label{X5}\\
&X_6 = \pder{}{x_6}, \label{X6}\\
&X_7 = \pder{}{x_7}, \label{X7}\\
&X_8 = \pder{}{x_8} \label{X8}
\end{align}         
            satisfy the multiplication table~\eq{X1X2}--\eq{X1X5}.
            Thus the fields $X_1,\dotsc, X_8 \in \Vec(\R^8)$ model the Lie
            algebra $\CL_2^{(4)}$.
        \item[$(2)$] 
            The vector field 
\begin{align}
&X_0 = x_2 \pder{}{x_1} - x_1 \pder{}{x_2} + x_5 \pder{}{x_4} - x_4 \pder{}{x_5} + P \pder{}{x_6} + Q \pder{}{x_7} + R \pder{}{x_8},  \label{X0}\\
&P = - \frac{x_1^4}{24} +   \frac{x_1^2 x_2^2}{8} + x_7, \label{P}\\ 
&Q = \frac{x_1 x_2^3}{12} +   \frac{x_1^3 x_2}{12} - 2 x_6 + 2 x_8, \label{Q}\\
&R = \frac{x_1^2 x_2^2}{8} - \frac{x_2^4}{24} -    x_7 \label{R}
\end{align}
            satisfies the following relations:
\begin{align}
&[X_0, X_1] = X_2, \qquad   [X_0, X_2] = - X_1, \qquad [X_0, X_3] = 0, \label{X0X1}\\
&[X_0, X_4] = X_5, \qquad   [X_0, X_5] = - X_4, \label{X0X4}\\
&[X_0, X_6] = 2 X_7, \qquad   [X_0, X_7] = X_8 - X_6, \qquad   [X_0, X_8] = - 2 X_7. \label{X0X6}
\end{align}
            Thus the field $X_0$ is an infinitesimal symmetry of the sub-Riemannian structure
            generated by the orthonormal frame $\left\{ X_1, X_2 \right\}$. 
    \end{itemize}
\end{theorem}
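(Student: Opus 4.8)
I would verify both assertions by direct computation of Lie brackets of the explicitly given polynomial vector fields in $\R^8$, organizing the bookkeeping by the Carnot grading. First assign weights $\nu=(1,1,2,3,3,4,4,4)$ to the coordinates $(x_1,\dots,x_8)$, i.e.\ use the dilations $\delta_\varepsilon\colon(x_1,\dots,x_8)\mapsto(\varepsilon x_1,\varepsilon x_2,\varepsilon^2 x_3,\varepsilon^3 x_4,\varepsilon^3 x_5,\varepsilon^4 x_6,\varepsilon^4 x_7,\varepsilon^4 x_8)$; a monomial vector field $x^\alpha\partial_{x_k}$ then has weight $\sum_j\nu_j\alpha_j-\nu_k$. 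Inspecting \eqref{X1}--\eqref{X8} term by term shows that each $X_k$ is $\delta_\varepsilon$-homogeneous of weight $-\nu_k$ (in particular $X_1,X_2$ have weight $-1$, which is just the dilation invariance of the \sR structure), and \eqref{X0}--\eqref{R} show that $X_0$ is homogeneous of weight $0$. The key elementary fact is that a homogeneous polynomial vector field of weight $\le -5$ must vanish, since the coefficient of every $\partial_{x_k}$ would have to be a polynomial of weight $\nu_k-5\le -1$.

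For part $(1)$, $[X_i,X_j]$ is homogeneous of weight $-(\nu_i+\nu_j)$, so the fact above forces $[X_i,X_j]=0$ whenever $\nu_i+\nu_j\ge 5$; the only pairs $i<j$ with $\nu_i+\nu_j\le 4$ are $\{1,2\}$, $\{1,3\}$, $\{2,3\}$, $\{1,4\}$, $\{1,5\}$, $\{2,4\}$, $\{2,5\}$. Hence every vanishing bracket required in \eqref{X1X2}--\eqref{X1X5} holds for free, and it only remains to check the seven relations $[X_1,X_2]=X_3$, $[X_1,X_3]=X_4$, $[X_2,X_3]=X_5$, $[X_1,X_4]=X_6$, $[X_1,X_5]=[X_2,X_4]=X_7$, $[X_2,X_5]=X_8$. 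Each follows from $[X_i,X_j]=\sum_k(X_i a^j_k-X_j a^i_k)\partial_{x_k}$, where $a^i_k$ is the coefficient of $\partial_{x_k}$ in $X_i$; homogeneity decides in advance which components can be nonzero (for instance $[X_1,X_4]$, $[X_1,X_5]$, $[X_2,X_4]$, $[X_2,X_5]$ automatically lie in $\spann(\partial_{x_6},\partial_{x_7},\partial_{x_8})=\spann(X_6,X_7,X_8)$), so only a handful of scalar polynomial identities must be confirmed; this is routine, and then $X_1,\dots,X_8$ model $\CL_2^{(4)}$.

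For part $(2)$ I would first verify \emph{only} the two relations $[X_0,X_1]=X_2$ and $[X_0,X_2]=-X_1$ by the same componentwise computation; the substantive part here is the $\partial_{x_6},\partial_{x_7},\partial_{x_8}$ components, where the quartics $P,Q,R$ interact with the cubic tails of $X_1$ and $X_2$. All the remaining relations \eqref{X0X1}--\eqref{X0X6} then follow with no further computation from the Jacobi identity together with part $(1)$, working up the strata of the lower central series: $[X_0,X_3]=[X_0,[X_1,X_2]]=[[X_0,X_1],X_2]+[X_1,[X_0,X_2]]=[X_2,X_2]+[X_1,-X_1]=0$; then $[X_0,X_4]=[X_0,[X_1,X_3]]=[X_2,X_3]+[X_1,[X_0,X_3]]=X_5$ and likewise $[X_0,X_5]=-[X_1,X_3]=-X_4$; finally $[X_0,X_6]=[X_2,X_4]+[X_1,X_5]=2X_7$, $[X_0,X_7]=[X_2,X_5]-[X_1,X_4]=X_8-X_6$, and $[X_0,X_8]=-[X_1,X_5]-[X_2,X_4]=-2X_7$. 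The relations $[X_0,X_1]=X_2$, $[X_0,X_2]=-X_1$ moreover say that $\ad_{X_0}$ leaves $\D=\spann(X_1,X_2)$ invariant and acts on it, in the orthonormal frame $(X_1,X_2)$, as the rotation generator $X_1\mapsto X_2$, $X_2\mapsto -X_1$; hence the flow of $X_0$ preserves $\D$ and takes an orthonormal frame to an orthonormal frame, so $X_0$ is an infinitesimal symmetry of $(G,\D,g)$.

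The only genuine computation is the check of $[X_0,X_1]=X_2$ and $[X_0,X_2]=-X_1$ (equivalently, that $P,Q,R$ are exactly these quartics) and, inside part $(1)$, the four weight-$(-4)$ brackets where the cubic tails of $X_1,X_2$ meet the quadratic tails of $X_3,X_4,X_5$; everything else is forced either by the grading or by the Jacobi identity. The conceptually hard part lies upstream of the statement --- producing the symmetric model and the symmetry field $X_0$ in the first place; given the explicit formulas, what remains is lengthy but entirely mechanical verification.
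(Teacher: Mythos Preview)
Your proof is correct and takes a genuinely different route from the paper's. The paper treats the theorem \emph{constructively}: it begins from the known symmetric model of $\CL_2^{(3)}$ on $\R^5$ and explains how the missing coefficients $a_j^i$ of $X_1,\dots,X_5$ in the $\partial_{x_6},\partial_{x_7},\partial_{x_8}$ directions were chosen (by writing the determinant $D=\det(X_1,\dots,X_8)$ as a Hessian in $a_3^6,a_3^7,a_3^8$ and picking these symmetrically so that $D=1$), then verifies~\eqref{X1X2}--\eqref{X1X5} by direct computation; for part~(2) it similarly \emph{derives} $P,Q,R$ by solving the cascade $X_1P=-x_1^3/6$, $X_2P=x_1^2x_2/2$, $X_3P=x_1x_2$, \dots, $X_7P=1$, $X_8P=0$ obtained from $[X_0,X_1]=X_2$, $[X_0,X_2]=-X_1$ and the already-known brackets, and only then checks~\eqref{X0X1}--\eqref{X0X6} directly. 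Your approach, by contrast, is purely \emph{verificational} and far more economical: the homogeneity argument with weights $\nu=(1,1,2,3,3,4,4,4)$ kills all brackets of weight $\le -5$ for free, leaving exactly the seven nontrivial brackets in part~(1); and in part~(2) you reduce everything to the two relations $[X_0,X_1]=X_2$, $[X_0,X_2]=-X_1$ via the Jacobi identity, which the paper does not exploit. What the paper's argument buys is insight into where the formulas come from (and a template for extending to higher step); what yours buys is a minimal and conceptually clean check once the formulas are given.
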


\begin{proof}

    In fact, the both statements of the proposition are verified by the direct
computation, but we prefer to describe a method of construction of the
vector fields $X_1,\dotsc, X_8$, and $X_0$.

$(1)$
In the previous work \cite{dido_exp} we constructed a similar symmetric
vector field model for the Lie algebra $\CL_2^{(3)}$, which has growth vector (2, 3, 5):
\begin{align}
    \CL_2^{(3)} &= \sspan\{X_1,\dotsc, X_5\} \subset \Vec(\R^5), \\
     X_1 &=  \pder{}{x_1} - \frac{x_2}{2} \pder{}{x_3} - \frac{x_1^2 + x_2^2}{2} \pder{}{x_5}, \label{X1L23}\\
     X_2 &= \pder{}{x_2} + \frac{x_1}{2} \pder{}{x_3} + \frac{x_1^2 + x_2^2}{2} \pder{}{x_4},\label{X2L23}\\
     X_3 &= \pder{}{x_3} + x_1 \pder{}{x_4} + x_2 \pder{}{x_5},\label{X3L23}\\
     X_4 &= \pder{}{x_4}, \label{X4L23}\\
     X_5 &= \pder{}{x_5}, \label{X5L23}
\end{align}
with the Lie brackets~\eq{X1X2}, \eq{X1X3}. 
Now we aim to ``continue'' these relationships to vector fields
$X_1,\dotsc, X_8 \in \Vec(\R^8)$ that span the Lie algebra $\CL_2^{(4)}$.
So we seek for vector fields of the form
\begin{align}
    X_1 &= \frac{\partial }{\partial x_1} - \frac{x_2}{2}\frac{\partial }{\partial x_3} - \frac{x_1^2 + x_2^2}{2}\frac{\partial }{\partial x_5}
    + \sum\limits_{i=6}^8 a_1^i\frac{\partial }{\partial x_i},  \; \label{X1aij}   \\
    X_2 &= \frac{\partial }{\partial x_2} + \frac{x_1}{2}\frac{\partial }{\partial x_3} - \frac{x_1^2 + x_2^2}{2}\frac{\partial }{\partial x_4}
    + \sum\limits_{i=6}^8 a_2^i\frac{\partial }{\partial x_i}, \;  \label{X2aij} \\
    X_3 &= \frac{\partial }{\partial x_3} + x_1\frac{\partial }{\partial x_4} + x_2\frac{\partial }{\partial x_5}
    + \sum\limits_{i=6}^8 a_3^i\frac{\partial }{\partial x_i}, \; \label{X3aij} \\ 
    X_4 &= \frac{\partial }{\partial x_4} + \sum\limits_{i=6}^8 a_4^i\frac{\partial }{\partial x_i}, \; \label{X4aij} \\ 
    X_5 &= \frac{\partial }{\partial x_5} + \sum\limits_{i=6}^8 a_5^i\frac{\partial }{\partial x_i}, \; \label{X5aij}\\ 
    X_j &= \sum\limits_{i=6}^8 a_i^j\frac{\partial }{\partial x_j}, \qquad j = 6, 7, 8, 
\end {align}
such that $\sspan\{X_1,\dotsc, X_8\} = \CL_2^{(4)}$.

Compute the required Lie brackets:

\begin{align*}
    \left[ X_1, X_2 \right] &= \frac{\partial }{\partial x_3} + x_1\frac{\partial }{\partial x_4} + x_2\frac{\partial }{\partial x_5}
    + \left( \frac{\partial a_2^6}{\partial x_1} - \frac{\partial a_1^6}{\partial x_2} \right)\frac{\partial }{\partial x_6}\\
    &+ \left( \frac{\partial a_2^7}{\partial x_1} - \frac{\partial a_1^7}{\partial x_2} \right)\frac{\partial }{\partial x_7}
    + \left( \frac{\partial a_2^8}{\partial x_1} - \frac{\partial a_1^8}{\partial x_2} \right)\frac{\partial }{\partial x_8},\\
    \left[ X_1, X_3 \right] &= \frac{\partial }{\partial x_4} + \frac{\partial a_3^6}{\partial x_1}\frac{\partial }{\partial x_6}
    + \frac{\partial a_3^7}{\partial x_1}\frac{\partial }{\partial x_7} + \frac{\partial a_3^8}{\partial x_1}\frac{\partial }{\partial x_8},\\
    \left[ X_2, X_3 \right] &= \frac{\partial }{\partial x_5} + \frac{\partial a_3^6}{\partial x_2}\frac{\partial }{\partial x_6}
    + \frac{\partial a_3^7}{\partial x_2}\frac{\partial }{\partial x_7} + \frac{\partial a_3^8}{\partial x_2}\frac{\partial }{\partial x_8},\\
    \left[ X_1, X_4 \right] &= \frac{\partial a_4^6}{\partial x_1}\frac{\partial }{\partial x_6} + \frac{\partial a_4^7}{\partial x_1}\frac{\partial }{\partial x_7}
    + \frac{\partial a_4^8}{\partial x_1}\frac{\partial }{\partial x_8},\\
    \left[ X_1, X_5 \right] &= \frac{\partial a_5^6}{\partial x_1}\frac{\partial }{\partial x_6} + \frac{\partial a_5^7}{\partial x_1}\frac{\partial }{\partial x_7}
    + \frac{\partial a_5^8}{\partial x_1}\frac{\partial }{\partial x_8},\\
    \left[ X_2, X_4 \right] &= \frac{\partial a_4^6}{\partial x_2}\frac{\partial }{\partial x_6} + \frac{\partial a_4^7}{\partial x_2}\frac{\partial }{\partial x_7}
    + \frac{\partial a_4^8}{\partial x_2}\frac{\partial }{\partial x_8},\\
    \left[ X_2, X_5 \right] &= \frac{\partial a_5^6}{\partial x_2}\frac{\partial }{\partial x_6} + \frac{\partial a_5^7}{\partial x_2}\frac{\partial }{\partial x_7}
    + \frac{\partial a_5^8}{\partial x_2}\frac{\partial }{\partial x_8}.
\end{align*}

The vector fields $X_1,\dotsc, X_8$ should be independent, thus the determinant constructed of these vectors as columns should satisfy the inequality
\begin{align*}
D = \det \left(X_1,\dotsc, X_8\right) = 
\begin{vmatrix}
a_6^6&a_7^6&a_8^6\\
a_6^7&a_7^7&a_8^7\\
a_6^8&a_7^8&a_8^8
\end{vmatrix} \neq 0.
\end{align*}
We will choose $a_i^j$ such that $D = 1$. It follows from the multiplication table
for $X_1,\dotsc, X_8$ that
\begin{align*}
D =  
\begin{vmatrix}
\ds    \frac{d^2a_3^6}{dx_1^2} & \ds\frac{d^2a_3^6}{dx_1dx_2}  & \ds\frac{d^2a_3^6}{dx_2^2}\\
\ds    \frac{d^2a_3^7}{dx_1^2} & \ds\frac{d^2a_3^7}{dx_1dx_2}  & \ds\frac{d^2a_3^7}{dx_2^2}\\
 \ds   \frac{d^2a_3^8}{dx_1^2} & \ds\frac{d^2a_3^8}{dx_1dx_2}  & \ds\frac{d^2a_3^8}{dx_2^2} 
\end{vmatrix}.
\end{align*}
In order to get $D = 1$, define the entries of this matrix in the following symmetric way:
$a_3^6 = \ds\frac{x_1^2}{2}$, $a_3^7 = x_1x_2$, $a_3^8 = \ds\frac{x_2^2}{2}$.  
Then we obtain from the multiplication table for $X_1,\dotsc, X_8$ that
$\ds\frac{\partial a_2^6}{\partial x_1} - \frac{\partial a_1^6}{\partial x_2} = a_3^6 = \frac{x_1^2}{2}$,  
$\ds    \frac{\partial a_2^7}{\partial x_1} - \frac{\partial a_1^7}{\partial x_2} = a_3^7 = x_1x_2$,
    $\ds\frac{\partial a_2^8}{\partial x_1} - \frac{\partial a_1^8}{\partial x_2} = a_3^8 = \frac{x_2^2}{2}$.  
We solve these equations in the following symmetric way:
    $a_1^6 = 0$, $a_2^6 = \ds\frac{x_1^3}{6}$, 
    $a_1^7 = -\ds\frac{x_1x_2^2}{4}$, $a_2^7 = \ds\frac{x_1^2x_2}{4}$,
    $a_1^8 = -\ds\frac{x_2^3}{6}$, $a_2^8 = 0$.  
Then we substitute these coefficients to~\eq{X1aij}, \eq{X2aij} 
and check item (1) of this theorem by direct computation.

Now we prove item $(2)$.
We proceed exactly as for item $(1)$: we start from an infinitesimal symmetry~\cite{dido_exp}
\be{X0L23}
X_0 = x_2 \pder{}{x_1} - x_1 \pder{}{x_2} + x_5 \pder{}{x_4} - x_4 \pder{}{x_5} \in \Vec(\R^5)
\ee
of the \sR structure on $\R^5$ determined by the orthonormal frame \eq{X1L23}, \eq{X2L23} and ``continue'' symmetry~\eq{X0L23} to the \sR structure on $\R^8$ determined by the orthonormal frame~\eq{X1}, \eq{X2}.

So we seek for a vector field $X_0 \in \Vec(\R^8)$ of the form~\eq{X0} for the functions $P, Q, R \in C^{\infty}(\R^8)$ to be determined so that the multiplication table~\eq{X0X1}--\eq{X0X6} hold.

The first two equalities in~\eq{X0X1} yield
$
X_1 P = - \ds\frac{x_1^3}{6}$, $\ds X_2 P = \frac{x_1^2 x_2}{2}$.
Further,
$X_3 P = [X_1, X_2] P = X_1 X_2 P - X_2 X_1 P = X_1 \ds\frac{x_1^2 x_2}{2} + X_2 \frac{x_1^3}{6} = x_1 x_2$.
Similarly it follows that
$
X_4 P = x_2$, 
$X_5 P = x_1$, 
$X_6 P = 0$,   
$X_7 P = 1$, 
$X_8 P = 0$. 
Since $X_6 P = X_8 P = 0$, then $P = P(x_1, x_2, x_3, x_4, x_5, x_7)$. Moreover, since $X_7 P = 1$, then $P =  x_7 + a(x_1, x_2, x_3, x_4, x_5)$.
The equality
$X_5 P = x_1$ implies that $\pder{a}{x_5} = 0$, i.e., $a =   a(x_1, x_2, x_3, x_4)$.
Similarly, 
since
$X_4 P = x_2$, then  $a =   a(x_1, x_2, x_3)$.
It follows from the equality
$X_3 P = x_1x_2$ that $\ds\pder{a}{x_3} = x_1x_2$, i.e.,   $a = x_1x_2x_3 +   b(x_1, x_2)$.
Moreover, the equality
$X_2 P = \ds\frac{x_1^2x_2}{2}$ implies that $\ds\pder{b}{x_2} = -x_1x_3 - \frac{x_1^2x_2}{4}$, i.e.,   $b = -x_1x_2x_3 -\ds\frac{x_1^2x_2^2}{8} +   c(x_1)$.
Finally, the equality
$X_1 P = -\ds\frac{x_1^3}{2}$ implies that $\ds\der{c}{x_1} = -\frac{x_1^3}{6} + \frac{x_1x_2^2}{2}$
i.e.,
$c = -\ds\frac{x_1^4}{24} + \frac{x_1^2x_2^2}{4}$.
Thus equality~\eq{P} follows. Similarly we get equalities~\eq{Q}, \eq{R}.

Then multiplication table~\eq{X0X1}--\eq{X0X6} for the vector field~\eq{X0}--\eq{R} is verified by a direct computation.  
\end{proof}

\section{Carnot group}\label{sec:group}
In this section we study the Carnot group $G$ with the Lie algebra $L = \CL_2^{(4)}$.

\subsection{Product rule in $G$}\label{subsec:product}
In this subsection we compute the product rule 
in the connected simply connected Lie group $G$ with the Lie algebra $L = \CL_2^{(4)}$ 
on which the vector fields $X_1,\dotsc, X_8$ given by \eqref{X1}--\eqref{X8}
are left-invariant.

Our algorithm for computation of the product rule in a Lie group $G$ with a known
left-invariant frame  $X_1,\dotsc, X_n\in \Vec(G)$ follows from the next
argument.
Let $g_1, g_2 \in G$, and let 
$g_2 = e^{t_n X_n} \circ\ldots\circ e^{t_1 X_1}(\Id)$, $t_1,\ldots,t_n \in \R$, 
where we denote by $\map{e^{tX}}{G}{G}$ the flow of the vector field $X$.
Then
    $g_1 \cdot g_2 = g_1 \cdot e^{t_n X_n} \circ\ldots\circ e^{t_1 X_1}(\Id) 
           = e^{t_n X_n} \circ\ldots\circ e^{t_1 X_1}(g_1)$
by left-invariance of $X_i$. 
So an algorithm for computation of $g_1 \cdot g_2$ is the  following:
\begin{enumerate}
    \item Compute $e^{t_i X_i}(g)$, \quad $t_i \in \R, \; g \in G$.
    \item Compute $e^{t_n X_n} \circ\ldots\circ e^{t_1 X_1}(g), \quad t_i \in \R, \; g \in G$.
    \item Solve the equation $e^{t_n X_n} \circ\ldots\circ e^{t_1 X_1}(\Id) = g_2$
        for $t_1,\ldots,t_n \in \R$ (we assume that this is possible in a unique way).
    \item Compute $g_1 \cdot g_2 = e^{t_n X_n} \circ\ldots\circ e^{t_1 X_1}(g_2)$.
\end{enumerate}
By this algorithm, we compute the product $z = x \cdot y$ in the coordinates on $G$ (notice that  as a manifold $G=\R^8$), as follows:
\begin{align*}
    x &= (x_1,\dotsc, x_8),\ y = (y_1,\dotsc, y_8), \ z = (z_1,\dotsc, z_8) \in G = \R^8,\\
    z_1 &= x_1 + y_1,\\
    z_2 &= x_2 + y_2,\\
    z_3 &= x_3 + y_3 + \frac{1}{2}(x_1y_2 - x_2y_1),\\
    z_4 &= x_4 + y_4 + \frac{1}{2}(x_1(x_1 + y_1) + x_2(x_2 + y_2) + x_1y_3),\\
    z_5 &= x_5 + y_5 - \frac{1}{2}y_1(x_1(x_1 + y_1) + x_2(x_2 + y_2)) + x_2y_3,\\ 
    z_6 &= x_6 + y_6 + \frac{x_1}{12}(2x_1^2y_2 + 3x_1y_1y_2 - 2 y_2^3 + 6x_1y_3 + 12y_4),\\ 
    z_7 &= x_7 + y_7 + \frac{1}{24}(3x_1^2y_2(2x_2 + y_2) - x_2(3x_2y_1^2 + 6y_1^2y_2 + 4(y_2^3 - 6y^4))\\
        &\qquad + x_1(-6x_2^2y_1 + 4y_1^3 + 6y_1y_2^2 + 24x_2y_3 + 24y_5)),\\
    z_8 &= x_8 + y_8 + \frac{x_2}{2}(-2x_2^2y_1 + 2y_1^3 -3x_2y_1y_2 + 6x_2y_3 + 12y_5).
\end{align*}


\subsection{Right-invariant frame on $G$}\label{subsec:right}
Computation of the right-invariant frame on $G$ corresponding
to a left-invariant frame can be done via the following simple lemma.
Denote the inversion on a Lie group $G$ as 
$i\ :\ G \rightarrow G$, $i\left(g\right) = g^{-1}$.

\begin{lemma}
    Let $X_1, X_2, X_3 \in \Vec(G)$ and $Y_1, Y_2, Y_3 \in \Vec(G)$ 
    be respectively left-invariant and right-invariant vector fields on a Lie group $G$
    such that
   $ Y_j(\Id) = -X_j(\Id)$, $j=1, 2, 3$.
    Then
\begin{align}
    i_*X_j &= Y_j, \quad i = 1, 2, 3, \label{iXiYi} \\
[X_1, X_2] &= X_3 \quad \Leftrightarrow\quad [Y_1, Y_2] = Y_3. \label{X1X2X3} 
\end{align}
\end{lemma}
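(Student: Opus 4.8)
The plan is to prove both assertions of the lemma from the single structural fact that left-invariant and right-invariant vector fields on $G$ are related by pushforward under the inversion map $i$. First I would recall the standard description of the two frames in terms of translations: for $g \in G$ write $L_g, R_g : G \to G$ for left and right translation; then a left-invariant field $X$ satisfies $X(g) = (L_g)_* X(\Id)$ and a right-invariant field $Y$ satisfies $Y(g) = (R_g)_* Y(\Id)$. The key computational identity is $i \circ L_g = R_{g^{-1}} \circ i$, which follows from $i(g \cdot h) = h^{-1} \cdot g^{-1} = R_{g^{-1}}(i(h))$. Differentiating this identity of maps and evaluating appropriately will give the pushforward relation between the frames.

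For item \eqref{iXiYi}: fix $j$ and set $Y := i_* X_j$, which is the vector field on $G$ whose value at $g$ is $(i)_*\big|_{g^{-1}} X_j(g^{-1})$ — here I use that $i$ is an involution, so $i_*$ of a field is again a genuine vector field, with $(i_* X_j)(g) = (di)_{i(g)}\big(X_j(i(g))\big)$. Using $X_j(g^{-1}) = (L_{g^{-1}})_* X_j(\Id)$ and then the identity $i \circ L_{g^{-1}} = R_g \circ i$ differentiated at $\Id$, I get $(i_* X_j)(g) = (R_g)_* (di)_{\Id} X_j(\Id) = (R_g)_*\big({-}X_j(\Id)\big) = (R_g)_* Y_j(\Id)$, because $(di)_{\Id} = -\Id$ on $T_{\Id}G$. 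The right-hand side is exactly $Y_j(g)$ by right-invariance, so $i_* X_j = Y_j$. The only mildly delicate point is the sign: one must justify $(di)_{\Id} = -\operatorname{Id}$, which is the standard fact obtained by differentiating $g \cdot g^{-1} = \Id$, i.e. the multiplication-map differential $(dm)_{(\Id,\Id)}(u,v) = u+v$ forces $(di)_{\Id} = -\Id$.

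For item \eqref{X1X2X3}: the map $i_*$ is a Lie-algebra-type operation on vector fields in the sense that for any diffeomorphism $\phi$ one has $\phi_*[U,V] = [\phi_* U, \phi_* V]$; but here there is a sign subtlety because $i$ reverses the group, so I would instead argue directly. Left-invariant fields are closed under bracket and right-invariant fields are closed under bracket; moreover the bracket of right-invariant fields, when read at $\Id$, is the \emph{negative} of the Lie-algebra bracket (this is the classical fact that the right-invariant extension gives an anti-homomorphism $\mathfrak g \to \Vec(G)$). Concretely: from \eqref{iXiYi} and naturality of the bracket under the diffeomorphism $i$, $[Y_1,Y_2] = [i_*X_1, i_*X_2] = i_*[X_1,X_2]$; so if $[X_1,X_2]=X_3$ then $[Y_1,Y_2] = i_* X_3 = Y_3$, and conversely applying $i_*$ again (using $i_* i_* = \Id$) gives the reverse implication. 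This is clean once \eqref{iXiYi} is in hand, so the real content — and the place to be careful — is the sign bookkeeping in the first part; everything else is formal manipulation with translations and the chain rule.
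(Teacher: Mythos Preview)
Your proof is correct and follows essentially the same approach as the paper's: the paper's proof is extremely terse, asserting only that \eqref{iXiYi} ``follows by the left-invariance and right-invariance of the fields $X_i$ and $Y_i$'' and that \eqref{X1X2X3} ``follows since the diffeomorphism $i:G\to G$ preserves Lie bracket of vector fields,'' while you have simply spelled out the underlying computations (the identity $i\circ L_{g^{-1}}=R_g\circ i$, the sign $(di)_{\Id}=-\Id$, and naturality of the bracket under $i_*$). Your brief digression about the right-invariant extension being an anti-homomorphism is unnecessary for the argument you actually give, but harmless.
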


\begin{proof}
Equality~\eq{iXiYi} follows by the left-invariance and right-invariance of the fields $X_i$ and $Y_i$ respectively.
Equality~\eq{X1X2X3}  follows since the diffeomorphism $i:G \rightarrow G$ preserves Lie bracket
of vector fields (see e.g. \cite{notes}).
\end{proof}

Thus if $X_1,\dots, X_n \in  \Vec G $ is a left-invariant frame on a Lie group $G$, then
$Y_1, \dots, Y_n \in \Vec G$, $Y_j=i_*X_j$,
is the right-invariant frame such that 
$Y_j(\Id) = -X_j(\Id)$, $j=1, \dots, n$,
and the same product rules as for  $X_1$, \dots, $X_n$. 

Immediate computation using the product rule in $G$ given in Subsec.~\ref{subsec:product} gives the following right-invariant frame on the Lie group $G=\R^8:$ 
\begin{align*} 
Y_1&= - \pder{}{x_1}  - \frac{x_2}{2}\pder{}{x_3 } - \frac{x_1x_2+2x_3}{2}\pder{}{ x_4} + \frac{x^2_1}{2}\pder{}{ x_5} \\
&\qquad + \frac{x^3_2-6x_4}{6}\pder{}{ x_6} - \frac{2x^3_1+3x_1x^2_2+12x_5}{12}\pder{}{ x_7},\\
Y_2&= - \pder{}{ x_2} - \frac{x_1}{2}\pder{}{ x_3} - \frac{x^2_2}{2}\pder{}{ x_4} + \frac{x_1x_2-2x_3}{2}\pder{}{ x_5}\\
&\qquad +  \frac{3x^2_1x_2+2x^3_2-12x_4}{12}\pder{}{ x_6} - \frac{x^3_1+6x_5}{6}\pder{}{ x_8},\\
Y_i&= -\pder{}{ x_i}, \qquad i=3, \dots, 8.
\end{align*}

\subsection{Left-invariant and right-invariant \\Hamiltonians on $T^*G$}\label{subsec:Ham}

Using the expressions for the left-invariant and right-invariant frames given in Subsec.~\ref{subsec:symmetr}
 and Subsec.~\ref{subsec:right}, we define the corresponding left-invariant and right-invariant Hamiltonians, linear on fibers in $T^* G$:
$$
h_i(\lambda) = \left\langle \lambda, X_i\right\rangle, \qquad 
g_i(\lambda) = \left\langle \lambda, Y_i\right\rangle
\qquad \lambda \in T^*G, \quad i=1, \dots, 8.
$$

In the  canonical coordinates
$(x_1, \dots, x_8, \psi_1, \dots, \psi_8)$ on $T^*G$~\cite{notes}
we have the following:
\begin{align*}
h_1&= \psi_1- \frac{x_2}{2}\psi_3-\frac{x^2_1+x^2_2}{2}\psi_5-\frac{x_1x^2_2}{4}\psi_7 - \frac{x^3_2}{6}\psi_8,\\
h_2&= \psi_2+ \frac{x_1}{2}\psi_3+\frac{x^2_1+x^2_2}{2}\psi_4+\frac{x^3_1}{6}\psi_6 + \frac{x^2_1x_2}{4}\psi_7,\\
h_3&= \psi_3+ x_1\psi_4+x_2\psi_5+\frac{x^2_1}{2}\psi_6 + x_1x_2\psi_7+\frac{x^2_2}{2}\psi_8,\\
h_4&= \psi_4 + x_1\psi_6+ x_2\psi_7,\\
h_5&= \psi_5 + x_1\psi_7+ x_2\psi_8,\\
h_i&=\psi_i, \qquad i=6, 7, 8,
\end{align*}
and
\begin{align}
g_1 &= -\psi_1 - \frac{x_2}{2}\psi_3 - \frac{x_1x_2+2x_3}{2}\psi_4+ \frac{x^2_1}{2}\psi_5 
\nonumber\\
&\qquad +  \frac{x^3_2-6x_4}{6}\psi_6 - \frac{2 x^3_1+3x_1x^2+12x_5}{12}\psi_7,
\label{g1}\\
g_2 &= -\psi_2 - \frac{x_1}{2}\psi_3 - \frac{x^2_2}{2}\psi_4+ \frac{x_1x_2 - 2x_3}{2}\psi_5 
\nonumber\\
&\qquad +  \frac{3x^2_1x_2+ 2x^3_2-12x_4}{12}\psi_6 - \frac{x^3_1+6x_5}{6}\psi_8,
\label{g2}\\
g_i &= -\psi_i, \qquad i=3, \dots, 8.
\label{gi}
\end{align}



\subsection{Casimir functions on  $L^*$}\label{subsec:Casimir}
In this subsection we compute Casimir functions on  the dual space $L^*$ to the Lie algebra $L = \CL_2^{(4)}$, i.e., the smooth functions 
$$\map{f}{L^*}{\R} \text{ such that }
\{f, h_i\} = 0, \qquad i = 1, \dots, 8.
$$
Simultaneously  we characterize orbits of the co-adjoint action of the Lie group $G$ on $L^*$
\be{co-orbit}
\{ \Ad_{q^{-1}}^*(h) \mid q \in G \}.
\ee

\begin{theorem}\label{th:Casimir}
The functions 
\be{Casimirs}
h_6, \quad h_7, \quad h_8, \quad 
C = 
h_5^2 h_6 - 2 h_4 h_5 h_7 + h_4^2 h_8 - 2 h_3 (h_6 h_8 -h_7^2)
\ee
are Casimir functions on $L^*$, $L = \CL_2^{(4)}$. 

If $h_6h_8-h^2_7 \neq 0,$ then these functions are independent, and any Casimir function depends functionally of them.
\end{theorem}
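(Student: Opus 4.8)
The plan is to verify the Casimir property directly and then count dimensions to get independence and completeness. For the first assertion, recall that a smooth function $f$ on $L^*$ is a Casimir if and only if $\{f, h_i\} = 0$ for $i = 1, \dots, 8$, where the Poisson bracket of linear-on-fibers Hamiltonians is governed by the Lie algebra structure: $\{h_i, h_j\} = \langle \lam, [X_i, X_j]\rangle$, which by the multiplication table \eqref{X1X2}--\eqref{X1X5} means $\{h_1, h_2\} = h_3$, $\{h_1, h_3\} = h_4$, $\{h_2, h_3\} = h_5$, $\{h_1, h_4\} = h_6$, $\{h_1, h_5\} = \{h_2, h_4\} = h_7$, $\{h_2, h_5\} = h_8$, and all remaining brackets among $h_1, \dots, h_8$ vanish. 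For $h_6$, $h_7$, $h_8$ the Casimir property is then immediate: each of these appears only as the result of a bracket, never as one of the two bracketed generators past step~3, so $\{h_j, h_i\} = 0$ for $j \in \{6,7,8\}$ and all $i$. For $C$, the only possibly nonzero brackets are $\{C, h_1\}$, $\{C, h_2\}$, $\{C, h_3\}$, $\{C, h_4\}$, $\{C, h_5\}$, since $h_6, h_7, h_8$ are central. Each of these is computed by the Leibniz rule, substituting the brackets listed above; for instance $\{C, h_1\}$ expands using $\{h_3, h_1\} = -h_4$, $\{h_4, h_1\} = -h_6$, $\{h_5, h_1\} = -h_7$, and one checks that the resulting polynomial in $h_3, h_4, h_5, h_6, h_7, h_8$ vanishes identically. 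This is the main computational content, and it is the step I would expect to be the real (if routine) obstacle; each of the five checks is a short polynomial identity in six variables.

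For the second assertion, suppose $h_6 h_8 - h_7^2 \ne 0$. Independence of $h_6, h_7, h_8, C$ is seen from the differentials: $dh_6, dh_7, dh_8$ are manifestly independent (they are coordinate differentials on $L^*$), and $\partial C / \partial h_3 = -2(h_6 h_8 - h_7^2) \ne 0$ while $h_6, h_7, h_8$ do not involve $h_3$, so $dC$ is independent of $dh_6, dh_7, dh_8$ at every such point. Hence the four functions are functionally independent on the open set $\{h_6 h_8 - h_7^2 \ne 0\}$.

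For completeness — that every Casimir is a function of these four — I would invoke the standard fact that on the open dense subset where the coadjoint orbits have maximal dimension, the number of independent Casimirs equals $\dim L^* - \max_{\lam} \dim \mathcal{O}_\lam$, where $\mathcal{O}_\lam$ is the coadjoint orbit \eqref{co-orbit} through $\lam$. Since $\dim L^* = 8$, it suffices to show the generic coadjoint orbit has dimension~$4$. The tangent space to $\mathcal{O}_\lam$ is spanned by the Hamiltonian vector fields $\vec h_i(\lam)$, whose pairwise brackets at $\lam$ are encoded by the $8 \times 8$ structure matrix $\mathcal{A}(\lam) = (\{h_i, h_j\}(\lam))$; the orbit dimension is $\rank \mathcal{A}(\lam)$. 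Using the bracket relations above, $\mathcal{A}(\lam)$ has the nonzero entries coming from $h_3$ (rows/columns $1,2$), from $h_4$ (rows/columns $1,3$), from $h_5$ (rows/columns $2,3$), from $h_6$ (rows/columns $1,4$), from $h_7$ (rows/columns $1,5$ and $2,4$), from $h_8$ (rows/columns $2,5$), and all of row/column $6,7,8$ zero. One computes $\rank \mathcal{A}(\lam)$ on the set $h_6 h_8 - h_7^2 \ne 0$: the $4 \times 4$ block in coordinates $1,2,4,5$ already has determinant $(h_6 h_8 - h_7^2)^2 \ne 0$, while row/column $3$ is a linear combination of the others (consistent with $C$ being a Casimir) and rows/columns $6,7,8$ vanish, so $\rank \mathcal{A}(\lam) = 4$ exactly. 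Therefore the space of independent Casimirs is $8 - 4 = 4$-dimensional, and since $h_6, h_7, h_8, C$ are four independent Casimirs, every Casimir function is locally a function of them, as claimed.
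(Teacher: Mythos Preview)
Your proposal is correct and follows essentially the same route as the paper: both verify the Casimir property of $h_6,h_7,h_8,C$ by direct computation from the bracket table, and both obtain completeness by showing that the generic coadjoint orbit has dimension~$4$ via the $4\times 4$ minor (in rows/columns $1,2,4,5$ of the Poisson matrix) with determinant $\Delta^2$, where $\Delta = h_6h_8-h_7^2$. The only notable difference is that the paper phrases the orbit-dimension computation through the vector fields $V_1,\dots,V_5$ and the Orbit Theorem, then goes one step further to identify the orbit explicitly with the common level set of the four Casimirs (a connectedness argument), which is used in the subsequent corollary; your argument via $\dim L^* - \rank\mathcal{A}(\lambda)$ is the standard equivalent formulation and suffices for the theorem as stated.
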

\begin{proof}
For all $i= 6, 7, 8,$ $j=1, \dots, 8,$ we have $[X_i, X_j] = 0$,
thus $\{h_i, h_j \}=0$. The equality $\{C, h_j\}=0$ for $j=1, \dots, 8$ is  verified immediately. Thus  $h_6$, $h_7$, $h_8$, $C$ are Casimir functions. 
Now we prove that there are no other Casimir functions on $L^*$.

Let $f \in C^\infty\left(L^*\right)$ be a Casimir function, then
\begin{align}
\left\lbrace f, h_1\right\rbrace   &= -h_3\frac{\partial  f}{\partial  h_2}-h_4\frac{\partial  f}{\partial  h_3}-h_6\frac{\partial  f}{\partial  h_4}-h_7\frac{\partial  f}{\partial  h_5}=0,\label{fh1}\\
\left\lbrace f, h_2\right\rbrace   &= h_3\frac{\partial  f}{\partial  h_1}-h_5\frac{\partial  f}{\partial  h_3}-h_7\frac{\partial  f}{\partial  h_4}-h_8\frac{\partial  f}{\partial  h_5}=0,
\label{fh2}\\
\left\lbrace f, h_3\right\rbrace   &= h_4\frac{\partial  f}{\partial  h_1}+h_5\frac{\partial  f}{\partial  h_2}=0,
\label{fh3}\\
\left\lbrace f, h_4\right\rbrace   &= h_6\frac{\partial  f}{\partial  h_1}+h_7\frac{\partial  f}{\partial  h_2}=0,
\label{fh4}\\
\left\lbrace f, h_5\right\rbrace   &= h_7\frac{\partial  f}{\partial  h_1}+h_8\frac{\partial  f}{\partial  h_2}=0.
\label{fh5}
\end{align}
These equalities are  conveniently rewritten in terms of the following vector fields  $V_i \in \Vec L^*$:
\begin{align}
 V_1&=-h_3\frac{\partial }{\partial  h_2}-h_4\frac{\partial }{\partial  h_3}-h_6\frac{\partial }{\partial  h_4}-h_7\frac{\partial }{\partial  h_5}, 
 \label{V1}\\
 V_2 &= h_3\frac{\partial }{\partial  h_1}-h_5\frac{\partial }{\partial  h_3}-h_7\frac{\partial }{\partial  h_4}-h_8\frac{\partial }{\partial  h_5} , 
 \label{V2}\\
 V_3 &=  h_4\frac{\partial }{\partial  h_1}+h_5\frac{\partial }{\partial  h_2}, 
 \label{V3}\\
 V_4 &= h_6\frac{\partial }{\partial  h_1}+h_7\frac{\partial }{\partial  h_2},
 \label{V4}\\
 V_5 &= h_7\frac{\partial }{\partial  h_1}+h_8\frac{\partial }{\partial  h_2}. 
 \label{V5}
\end{align}
Namely, equalities \eqref{fh1}--\eqref{fh5} have the form
$V_if=0$, $i = 1, \dots, 5$.

The vector fields $V_i$, $i=1,\dots, 5,$ form a Lie algebra with the product table
$\left[V_1, V_2 \right] = -V_3$, $\left[V_1, V_3 \right] = -V_4$,
$\left[V_2, V_3 \right] = -V_5$.
Denote for any $h \in L^*$ by $O_h$ the orbit of the fields $V_1, \dots, V_5$ passing through the point $h$~\cite{notes}. 
It is easy to see that $O_h$ is the orbit~\eq{co-orbit} of the co-adjoint action of the Lie group $G$ on $L^*$~\cite{marsden_ratiu, kirillov}.

By the Orbit Theorem \cite{notes}, $O_h$ is an immersed submanifold of $L^*$ of dimension
$$\dim O_h = \dim \Lie_h(V_1, \dots, V_5)= \dim \sspan (V_1(h), \dots, V_5(h)) = \rank J(h),$$
where
\begin{equation}
J(h)=(V_1, \dots, V_5)= \left(
\begin{array}{ccccc}
0 & h_3 & h_4 & h_6 & h_7\\
-h_3 & 0 & h_5 & h_7 & h_8\\
-h_4 & -h_5 & 0 & 0 & 0\\
-h_6 & -h_7 & 0 & 0 & 0\\
-h_7 & -h_8 & 0 & 0 & 0
\end{array} \right).
\end{equation}
Further, since
$O_h$ is a co-adjoint orbit, it is a symplectic, thus even-dimensional manifold, i.e., $\dim O_h \in \{0, 2, 4\}$.

Denote $\Delta = h_6h_8-h^2_7,$ and let $\Delta\neq 0$.  Since
\begin{equation}
\det
\left(
\begin{array}{cccc}
0 & h_3 & h_6 & h_7\\
-h_3 & 0 & h_7 & h_8\\
-h_6 & -h_7 & 0 & 0\\
-h_7 & -h_8 & 0 & 0
\end{array}\right)= -\Delta^2\neq 0, 
\end{equation}
then $\rank J(h) = \dim O_h =4.$ We have 
\begin{equation}
\label{ohsubset}
O_h  \subset \left\lbrace  h' \in L^* | C(h')= C(h), \,\, h_i (h') = h_i(h), \, \, i=6, 7, 8\right\rbrace . 
\end{equation}
The subset in the right-hand side of inclusion \eqref{ohsubset} is arcwise connected, thus this inclusion is in fact an equality.  In greater detail:
\begin{align}
O_h &= \R^2_{h'_1, h'_2} \times Q, \label{OQ}\\
Q &= \left\lbrace 
\left(h'_3, h'_4, h'_5\right) 
\in \R^3 | h'_3 = \left( h_6 \left(h'_5\right)^2 - 2h_7h'_4h'_5 + h_8\left( h'_4\right)^2 - C\right)/ (2\Delta)   \right\rbrace. \label{Q1}
\end{align}
If $\Delta>0$, then $Q$ is an elliptic paraboloid; and if $\Delta <0$,  then $Q$ is a hyperbolic paraboloid.  

So in the case $\D \neq 0$ the orbits $O_h$ are common level sets of the functions~\eq{Casimirs}. Any Casimir function is constant on the orbits $O_h$,  thus it depends functionally on the functions~\eq{Casimirs}.
\end{proof}

The next description of co-adjoint orbits follows from the previous proof.

\begin{corollary}
\label{cor:orbits}
let $h \in L^*$. Denote $\Delta = h_6h_8-h^2_7,$ $\Delta_1 = h_5h_7-h_4h_8,$ $\Delta_2 = h_5h_6-h_4h_7$. 
\begin{itemize}
\item[$(1)$]
The co-adjoint orbit  $\{ \Ad_{q^{-1}}^*(h) \mid q \in G \}$ coincides with the orbit $O_h$ of vector fields~\eq{V1}--\eq{V5} through the point $h$.
\item[$(2)$]
The orbits $O_h$ have the following dimensions:
\begin{itemize}
\item[$(2.1)$]
$\D^2 + \D_1^2 + \D_2^2 \neq 0 \quad \then \quad \dim O_h = 4$,
\item[$(2.2)$]
$\D^2 + \D_1^2 + \D_2^2 =0, \ h_3^2 + \dots + h_8^2 \neq 0 \quad \then \quad \dim O_h = 2$,
\item[$(2.3)$]
$h_3^2 + \dots + h_8^2 = 0 \quad \then \quad \dim O_h = 0$.
\end{itemize}
\item[$(3)$]
If $\D\neq 0$, then the orbit $O_h$ is described explicitly as~\eq{OQ}, \eq{Q1}.
\end{itemize}
\end{corollary}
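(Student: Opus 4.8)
The plan is to read everything off the proof of Theorem~\ref{th:Casimir}. Part $(1)$ is essentially contained there: the vector fields $V_1,\dots,V_5$ of \eqref{V1}--\eqref{V5} are precisely the infinitesimal generators of the co-adjoint action of $G$ on $L^*$ (their components encode the operators dual to $[X_i,\cdot\,]$ via the structure constants \eqref{X1X2}--\eqref{X1X5}), so by the Orbit Theorem \cite{notes} the orbit $O_h$ of $V_1,\dots,V_5$ through $h$ coincides with the co-adjoint orbit $\{\Ad^*_{q^{-1}}(h)\mid q\in G\}$. Part $(3)$ requires no new work: formulas \eqref{OQ}, \eqref{Q1} are exactly the claimed description of $O_h$ when $\Delta\neq 0$.

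The substance is part $(2)$. By the Orbit Theorem $\dim O_h=\rank J(h)$, and since $J(h)$ is skew-symmetric this rank lies in $\{0,2,4\}$. I would invoke the classical fact that the rank of a skew-symmetric matrix equals the largest order of a nonvanishing \emph{principal} minor (and that this order is even). It then suffices to compute the five $4\times 4$ principal minors of $J(h)$, each of which is the square of the Pfaffian of the $4\times 4$ skew-symmetric submatrix obtained by deleting one row and the corresponding column. Deleting index $1$ or index $2$ leaves a $3\times 3$ block of zeros in the lower right, so those two Pfaffians vanish identically; deleting index $3$ gives exactly the $4\times 4$ determinant $-\Delta^2$ already evaluated in the proof of Theorem~\ref{th:Casimir}; and deleting indices $4$ and $5$ gives, by the same short Pfaffian bookkeeping, the values $\Delta_1^2$ and $\Delta_2^2$ respectively. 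Hence $\rank J(h)=4$ precisely when $\Delta^2+\Delta_1^2+\Delta_2^2\neq 0$, which is case $(2.1)$.

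In the remaining regime $\Delta^2+\Delta_1^2+\Delta_2^2=0$ all $4\times 4$ principal minors vanish, so $\rank J(h)\le 2$; being even, it is $0$ or $2$. It equals $0$ iff $J(h)=0$, and since the entries of $J(h)$ are exactly $\pm h_3,\dots,\pm h_8$, this happens iff $h_3^2+\dots+h_8^2=0$. This yields cases $(2.3)$ and $(2.2)$, and since $h_3^2+\dots+h_8^2=0$ forces $\Delta=\Delta_1=\Delta_2=0$, the three subcases of $(2)$ are exhaustive.

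The only mildly nonroutine ingredient is the cited characterization of the rank of a skew-symmetric matrix via principal minors; the rest is a few lines of Pfaffian computation. I do not expect a real obstacle — the statement is genuinely a corollary — so the main care needed is consistency of the Pfaffian signs and checking that the case split $(2.1)$--$(2.3)$ covers all of $L^*$.
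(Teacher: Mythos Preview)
Your argument is correct and follows the same overall strategy as the paper, which simply asserts that the corollary ``follows from the previous proof'' of Theorem~\ref{th:Casimir} without further detail. The paper's proof of that theorem only computes the single $4\times 4$ minor obtained by deleting index $3$ (yielding $\Delta^2$) and invokes the symplectic structure of co-adjoint orbits for even-dimensionality, so your systematic Pfaffian computation of all five $4\times 4$ principal minors, together with the principal-minor rank characterization for skew-symmetric matrices, actually fills in the cases $\Delta=0$, $\Delta_1^2+\Delta_2^2\neq 0$ that the paper leaves implicit. This is a modest but genuine addition: your route handles the full trichotomy $(2.1)$--$(2.3)$ uniformly via linear algebra, whereas the paper relies on the reader to extrapolate from the one generic case treated.
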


In Subsec.~\ref{subsec:reduction} we consider the restriction of the vertical part of the Hamiltonian vector field $\vH$ to the orbit $O_h$, $\D \neq 0$.


\section{Pontryagin maximum principle}\label{sec:PMP}

In this section we apply a necessary optimality condition --- Pontryagin Maximum Principle
(PMP) \cite{PBGM,notes} to the sub-Riemannian problem~\eq{sys}--\eq{J} 
and derive ODEs for the geodesics of this problem. To this end
introduce the Hamiltonian of PMP 
\begin{align*}
&h_u^\nu(\lambda) = u_1 h_1(\lambda) + u_2 h_2(\lambda) +
\frac{\nu}{2}(u_1^2 + u_2^2), \\
&\lambda \in T^{*}G, \qquad u \in \mathbb{R}^2, \qquad \nu \in \mathbb{R}.
\end{align*}

\begin{theorem}[PMP, \cite{notes}]
Let $q(t)$, $t\! \in\! [0, t_1]$, be a SR minimizer corresponding to a control
$u(t)$,\; $t\!\in\![0, t_1]$. Then there exists a Lipschitzian curve $\lambda(t) \in T^{*}G,\; t \in [0, t_1]$,
$\pi(\lambda(t)) = q(t)$, and a number $\nu \in \{-1,\:0\}$ such that the following conditions hold:

\begin{enumerate}
 \item the Hamiltonian system of PMP
 \begin{equation} \label{Ham_gen}
 \dot{\lambda}(t) = \overrightarrow{h}^{\nu}_{u(t)} (\lambda(t)) \quad a.\,e.\; t \in [0,\:t_1], 
 \end{equation}
 \item the maximality condition
$h^{\nu}_{u(t)} (\lambda(t)) = \max \limits_{v \in \mathbb{R}^2} h^{\nu}_{v} (\lambda(t))$, $t \in [0,\:t_1]$,
   \item and the nontriviality condition
$  (\lambda(t), \nu) \ne (0,\,0)$, $t \in [0,\:t_1]$.
\end{enumerate}
\end{theorem}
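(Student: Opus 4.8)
The plan is to derive this statement from the general Pontryagin Maximum Principle (as stated, e.g., in \cite{notes,PBGM}) by specializing it to the optimal control problem \eqref{sys}--\eqref{J}. First I would recall that sub-Riemannian length minimizers, after reparametrization by a constant multiple of arclength, coincide with the minimizers of the energy functional $J$ over the fixed interval $[0,t_1]$: this rests on the Cauchy--Schwarz estimate $l^2 \le 2 t_1 J$ together with the fact that equality holds exactly when $|u|$ is constant. Hence a SR minimizer $q(t)$ with associated control $u(t)$ may be regarded as a minimizer of the Lagrange problem with dynamics $\dot q = u_1 X_1(q) + u_2 X_2(q)$, boundary conditions \eqref{bound}, and running cost $\tfrac12(u_1^2+u_2^2)$, the control belonging to $L^\infty([0,t_1],\R^2)$ after the constant-speed reparametrization.

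Next I would introduce the control-dependent (Pontryagin) Hamiltonian of this Lagrange problem. With the sign convention $\nu \le 0$ for the cost multiplier it is exactly
\[
h_u^\nu(\lam) = u_1 h_1(\lam) + u_2 h_2(\lam) + \frac{\nu}{2}(u_1^2+u_2^2),
\]
because $h_i(\lam) = \langle \lam, X_i\rangle$ is the Hamiltonian lift of $X_i$ and the term linear in $u$ is the lift of the controlled vector field. Applying the general PMP then produces a Lipschitzian covector trajectory $\lam(t) \in T^*G$ with $\pi(\lam(t)) = q(t)$ and a scalar $\nu \le 0$, with $(\lam,\nu) \neq (0,0)$, satisfying the Hamiltonian system $\dot\lam = \overrightarrow{h}^{\,\nu}_{u(t)}(\lam)$ a.e. and the pointwise maximality $h^{\nu}_{u(t)}(\lam(t)) = \max_{v\in\R^2} h^{\nu}_v(\lam(t))$. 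The normalization $\nu \in \{-1,0\}$ follows by rescaling the pair $(\lam,\nu)$ by a positive constant, since all the conditions are positively homogeneous in $(\lam,\nu)$: if $\nu\neq 0$ divide by $|\nu|$, otherwise keep $\nu=0$. For the pointwise nontriviality $(\lam(t),\nu)\neq(0,0)$ on all of $[0,t_1]$: if $\nu=-1$ it is automatic, while if $\nu=0$ the covector $\lam$ evolves along the fibers by an ODE linear in $\lam$, so $\lam(t_0)=0$ for one $t_0$ would give $\lam\equiv 0$, contradicting $(\lam,\nu)\neq 0$; thus $\lam(t)\neq 0$ for every $t$.

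The one point that needs a small argument is the maximality condition over the \emph{non-compact} control set $\R^2$. When $\nu=-1$ the map $v\mapsto h^{-1}_v(\lam)$ is a strictly concave quadratic with unique maximum at $v=(h_1(\lam),h_2(\lam))$, so the maximum is attained and no compactification is needed. When $\nu=0$, if $(h_1(\lam(t)),h_2(\lam(t)))\neq 0$ the supremum of $h^0_v(\lam(t))$ over $v\in\R^2$ is $+\infty$, incompatible with maximality along an optimal control; therefore $h_1(\lam(t))=h_2(\lam(t))=0$ along abnormal extremals --- a consistent conclusion that is exactly what is exploited afterwards in the description of abnormal extremals. So the main obstacle is not located in the present reduction at all: it already sits inside the general PMP being quoted, and what remains here is routine bookkeeping --- the sign of the cost multiplier, the rescaling to $\nu\in\{-1,0\}$, and promoting nontriviality from almost everywhere to everywhere.
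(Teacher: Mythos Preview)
The paper does not prove this theorem at all: it is quoted verbatim from \cite{notes} as a black-box input and then applied. Your proposal is therefore more than the paper offers --- you correctly sketch the standard reduction (energy vs.\ length via Cauchy--Schwarz, specialization of the general PMP, rescaling $\nu$ to $\{-1,0\}$, and propagating nontriviality along the linear ODE), which is exactly how one justifies this specialized statement from the general result in \cite{notes,PBGM}.
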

In view of the product rule \eqref{X1X2}--\eqref{X1X5}, the Hamiltonian system~\eqref{Ham_gen}
reads in the parametrization $T^{*}G \ni \lambda = (h_1,\ldots,h_8,q)$ as follows:
\begin{align*}
&\dot{h}_1 = -u_2 h_3, \\
&\dot{h}_2 = u_1 h_3, \\
&\dot{h}_3 = u_1 h_4 + u_2 h_5, \\
&\dot{h}_4 = u_1 h_6 + u_2 h_7, \\
&\dot{h}_5 = u_1 h_7 + u_2 h_8, \\
&\dot{h}_6 = \dot{h}_7 = \dot{h}_8 = 0,\\
&\dot{q} = u_1 X_1 + u_2 X_2.
\end{align*}
In the next subsections we specialize the conditions
of PMP for the abnormal ($\nu = 0$) and normal
($\nu = -1$) cases.

\subsection{Abnormal case}
Let $\nu = 0$. Then the maximality condition
$h^{0}_{u} (\lambda) = u_1 h_1(\lambda) + u_2 h_2(\lambda) \rightarrow \max \limits_{u \in \mathbb{R}^2}$
yields  the identities along abnormal extremals:
$h_1(\lambda) = h_2(\lambda) = 0$.
Then
$0 = \dot{h}_1 = -u_2 h_3$ and  $0 = \dot{h}_2 = u_1 h_3$.
Since any minimizer can be reparametrized to have a constant velocity ($u_1^2 +u_2^2 \equiv \const$),
we have $u_1^2 +u_2^2 \ne 0$ along non--constant trajectory, thus abnormal extremals satisfy
one more identity:
$ h_3(\lambda) = 0$.
Then $0 = \dot{h}_3 = u_1 h_4 + u_2 h_5$, thus $(u_1(t),\,u_2(t)) = k(t)(-h_5(t),\, h_4(t))$
along abnormal extremals. After reparametrization of time we get the abnormal controls
$u_1 = -h_5$, $u_2 = h_4$.
Summing up, abnormal extremals $\lambda(t)$ are described as follows.

\begin{proposition}
Abnormal extremals  of the $(2,3,5,8)$ sub-Riemannian problem~\eq{sys}--\eq{J}
are reparameterizations of curves $\lambda(t) \in T^*G$ that satisfy the conditions
\begin{align}
&h_1(\lambda(t)) =  h_2(\lambda(t)) = h_3(\lambda(t)) = 0,\nonumber \\
&\begin{pmatrix} \dot{h}_4 \\ \dot{h}_5 \end{pmatrix} = D \begin{pmatrix} h_4 \\ h_5 \end{pmatrix}, \qquad
D = \begin{pmatrix} h_7 & -h_6 \\ h_8 & -h_7 \end{pmatrix}, \label{h4h5dot} \\
&\dot{h}_6 = \dot{h}_7 = \dot{h}_8 = 0, \nonumber \\
&\dot{q} = -h_5 X_1 + h_4 X_2.\nonumber
\end{align}
\end{proposition}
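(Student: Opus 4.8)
The plan is to run the standard Pontryagin-maximum-principle bookkeeping for the $\nu=0$ case, exactly in the order the text already sketches, and then package it as the stated proposition. First I would record the maximality condition: for $\nu=0$ the PMP Hamiltonian is $h^0_u(\lambda)=u_1h_1(\lambda)+u_2h_2(\lambda)$, which is linear in the unbounded control $u\in\R^2$, so its maximum over $u$ is finite only if $h_1(\lambda(t))=h_2(\lambda(t))=0$ for all $t$. This is the first pair of identities along any abnormal extremal.

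Next I would differentiate these identities along the flow \eqref{Ham_gen}, using the explicit vertical equations $\dot h_1=-u_2h_3$, $\dot h_2=u_1h_3$ displayed just before the abnormal subsection. From $0=\dot h_1=-u_2h_3$ and $0=\dot h_2=u_1h_3$ one gets $h_3(\lambda(t))(u_1,u_2)=0$; invoking the standard reduction that any minimizer may be reparametrized to have $u_1^2+u_2^2\equiv\const\neq 0$ on a nonconstant arc (so the control is nowhere zero), this forces $h_3(\lambda(t))=0$. Then differentiate once more: $0=\dot h_3=u_1h_4+u_2h_5$, so $(u_1,u_2)$ is proportional to $(-h_5,h_4)$, and after a further reparametrization of time one may take $u_1=-h_5$, $u_2=h_4$. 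Substituting these controls into $\dot h_4=u_1h_6+u_2h_7$, $\dot h_5=u_1h_7+u_2h_8$ gives exactly the linear system \eqref{h4h5dot} with the matrix $D=\begin{pmatrix}h_7&-h_6\\ h_8&-h_7\end{pmatrix}$; the relations $\dot h_6=\dot h_7=\dot h_8=0$ and $\dot q=-h_5X_1+h_4X_2$ are immediate from the general Hamiltonian system with these controls. Collecting all of this yields the proposition.

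The only genuinely delicate point — and the one I would state carefully rather than gloss — is the passage $h_3(u_1,u_2)=0\Rightarrow h_3\equiv 0$: a priori $h_3$ could vanish only on the support of $(u_1,u_2)$, so one needs the arc to be nonconstant together with the constant-speed reparametrization to conclude $u_1^2+u_2^2$ is a nonzero constant, hence $(u_1,u_2)$ never vanishes and $h_3\equiv 0$ identically. A similar care is needed when passing from $u_1h_4+u_2h_5=0$ to the proportionality $(u_1,u_2)=k(t)(-h_5,h_4)$: this is valid wherever $(h_4,h_5)\neq(0,0)$, and on the complementary set one has $h_4=h_5=0$ so the system \eqref{h4h5dot} still holds trivially; the "reparametrization to $u_1=-h_5$, $u_2=h_4$" should therefore be read as describing abnormal extremals up to reparametrization, which is precisely how the proposition is phrased. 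Everything else is routine substitution into the Hamiltonian equations already derived from the multiplication table \eqref{X1X2}--\eqref{X1X5}.
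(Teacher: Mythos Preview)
Your proposal is correct and follows essentially the same approach as the paper: use the maximality condition to force $h_1=h_2=0$, differentiate along the flow to get $h_3=0$ via the constant-speed reparametrization, differentiate again to obtain $(u_1,u_2)\parallel(-h_5,h_4)$, reparametrize, and substitute into the remaining vertical equations. You are in fact slightly more careful than the paper about the edge cases (the passage from $h_3(u_1,u_2)=0$ to $h_3\equiv 0$, and the locus where $(h_4,h_5)=(0,0)$), which the paper treats informally.
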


We have $\tr D = 0$, $\D = \det D = h_6h_8-h_7^2$, and the following cases are possible:
\begin{itemize}
\item[$(1)$]
$\D < 0$, then system~\eq{h4h5dot} has the saddle phase portrait, 
\item[$(2)$]
$\D > 0$, then system~\eq{h4h5dot} has the center phase portrait, 
\item[$(3)$]
$\D = 0$, $D \neq 0$, then    the  phase portrait of~\eq{h4h5dot} consists of lines and fixed points, 
\item[$(4)$]
$D = 0$, then    the  phase portrait of~\eq{h4h5dot} consists of   fixed points.
\end{itemize}
Thus follows that abnormal extremals are analytic (this is related to the famous open question on smoothness of sub-Riemannian minimizers~\cite{mont, monti}).

One can show that projections of abnormal extremal trajectories to the plane $\R^2_{x_1x_2}$ in these cases are respectively the following:
\begin{itemize}
\item[$(1)$]
hyperbolas, their separatrices, and center, 
\item[$(2)$]
homothetic ellipses and their center, 
\item[$(3)$]
parabolas, 
\item[$(4)$]
fixed points.
\end{itemize}
Trajectories that project to hyperbolas and parabolas are strictly abnormal (i.e., abnormal trajectories that are not normal trajectories~\cite{sussmann_liu, notes}). Moreover, one can parameterize the abnormal variety, i.e., the submanifold of $G$ filled by   abnormal trajectories~\cite{monti}. These results will appear in a forthcoming work~\cite{2358_abnorm}.

\subsection{Normal case}
Let $\nu = -1$. Then the maximality condition
$h^{-1}_{u} (\lambda) = u_1 h_1(\lambda) + u_2 h_2(\lambda) - \frac{1}{2}\left(u_1^2 + u_2^2\right) \rightarrow \max \limits_{u \in \mathbb{R}^2}$
yields the normal controls
$u_1 = h_1$,  $u_2 = h_2.$
Thus the normal extremals are trajectories of the Hamiltonian system
  \begin{equation}\label{Ham}
  \dot{\lambda} = \overrightarrow{H}(\lambda), \quad \lambda \in T^{*}G,
  \end{equation}
with the normal Hamiltonian
\be{H}
H = \frac{1}{2}\left(h_1^2 + h_2^2\right).
\ee
In the parametrization $T^{*}G \ni \lambda = (h_1,\ldots,h_8,q)$, system \eqref{Ham} reads as follows:
\begin{align}
&\dot{h}_1 = -h_2 h_3, \label{dh1}\\
&\dot{h}_2 = h_1 h_3, \label{dh2}\\
&\dot{h}_3 = h_1 h_4 + h_2 h_5, \label{dh3}\\
&\dot{h}_4 = h_1 h_6 + h_2 h_7, \label{dh4}\\
&\dot{h}_5 = h_1 h_7 + h_2 h_8, \label{dh5}\\
&\dot{h}_6 = \dot{h}_7 = \dot{h}_8 = 0, \label{dh8}\\
&\dot{q} = h_1 X_1 + h_2 X_2. \nonumber
\end{align}


\section{Integrability of the normal Hamiltonian field}\label{sec:normal}
In this section we study integrability of the  Hamiltonian field $\vH$. We compute 10 independent integrals of $\vH$, of which only 7 are in involution. Recall that for the Liouville integrability of the Hamiltonian system $\dlam = \vH(\lam)$ with 8 degrees of freedom we need 8 independent integrals in involution~\cite{arnold_mech}. After reduction by Casimir functions~\eq{Casimirs}, the vertical subsystem of $\vH$ shows numerically a chaotic dynamics, which leads to  Conjecture~\ref{conj:nonintegr} below on non-integrability of $\vH$. 

\subsection{Algebra of integrals of $\vH$}
The normal Hamiltonian system $\dlam = \vH(\lam)$ reads in the canonical coordinates $(\p_1, \dots, \p_8; x_1, \dots x_8)$ on $T^*G$ as follows:
\begin{align}
&\dot \p_1 = 
h_1 \left(x_1 \p_5 + \frac{x_2^2}{2} \p_7\right) 
- h_2\left(\frac 12 \p_3 + x_1 \p_4 + \frac{x_1^2}{2} \p_6 + \frac{x_1x_2}{2} \p_7\right), 
\nonumber\\
&\dot \p_2 =  
h_1\left(\frac 12 \p_3 + x_2 \p_5 + \frac{x_1 x_2}{2} \p_7 + \frac{x_2^2}{2} \p_8\right)-
h_2\left(x_2 \p_4 + \frac{x_1^2}{2} \p_7\right), 
\nonumber\\
&\dot \p_i = 0, \qquad i = 3, \dots, 8, 
\nonumber\\
&\dot q = h_1 X_1(q) + h_2 X_2(q), 
\nonumber\\
&h_1 = \p_1 - \frac{x_2}{2} \p_3 - \frac{x_1^2 + x_2^2}{2} \p_5 - \frac{x_1 x_2^2}{4} \p_7 - \frac{x_2^3}{6} \p_8, 
\label{h1}\\ 
&h_2 = \p_2 + \frac{x_1}{2} \p_3 + \frac{x_1^2 + x_2^2}{2} \p_4 + \frac{x_1^3}{6} \p_6 + \frac{x_1^2x_2}{4} \p_7.
\label{h2}
\end{align}

In view of results of Secs.~\ref{sec:realis}, \ref{sec:group}, the Hamiltonian field $\vH$ has the following integrals:
\begin{itemize}
\item
the system Hamiltonian $H$~\eq{H},
\item
right-invariant Hamiltonians $g_1$, \dots, $g_8$~\eq{g1}--\eq{gi},
\item
the Hamiltonian of rotation $h_0(\lam) = \langle \lam, X_0\rangle$~\eq{X0},
\item
Casimir functions  $h_6$, $h_7$, $h_8$, $C$~\eq{Casimirs},
\item
the cyclic variables $\p_3$, \dots, $\p_8$ of the Hamiltonian $H$~\eq{H}, \eq{h1}, \eq{h2}.  
\end{itemize}
Of these integrals, only 10 are functionally independent, thus we get an algebra of integrals
\be{I}
I = \spann(H, g_1, \dots, g_8, h_0)
\ee
with the nonzero brackets
\begin{gather}
\{h_0, g_4\} = g_5, \qquad \{h_0, g_5\} = -g_4, \label{h0g4}\\
\{h_0, g_6\} = 2 g_7, \qquad \{h_0, g_7\} = g_8-g_6, \qquad \{h_0, g_8\} = -2g_7.  \label{h0g6} 
\end{gather}
So we have an Abelaian algebra generated by 7 independent integrals:
\be{A}
A = \spann(H, g_3, \dots, g_8).
\ee
We proved the following statement.

\begin{theorem}
The normal Hamiltonian vector field $\vH$ has an algebra $I$~\eq{I}--\eq{h0g6}  of 10 independent integrals, and an Abelian algebra $A$~\eq{A} of 7 independent integrals.
\end{theorem}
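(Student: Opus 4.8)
The plan is to verify that each of the claimed integrals is indeed a first integral of $\vH$, count the functionally independent ones, and then compute the Poisson bracket table. I would organize the proof in four stages.

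\textbf{Stage 1: each listed function is an integral.} The system Hamiltonian $H$ is an integral of its own Hamiltonian field automatically. The right-invariant Hamiltonians $g_1,\dots,g_8$ are integrals of $\vH$ because left-invariant and right-invariant vector fields commute, so $\{g_i, H\} = \frac12\{g_i, h_1^2+h_2^2\} = h_1\{g_i,h_1\} + h_2\{g_i,h_2\} = 0$ since $\{g_i, h_j\}$ corresponds to the bracket of a right-invariant with a left-invariant field, which vanishes. The Casimir functions $h_6,h_7,h_8,C$ from Theorem~\ref{th:Casimir} Poisson-commute with all $h_i$, in particular with $h_1,h_2$, hence with $H$. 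For $h_0 = \langle\lambda, X_0\rangle$: using the relations \eqref{X0X1} we get $\{h_0, h_1\} = \langle\lambda, [X_0,X_1]\rangle = h_2$ and $\{h_0,h_2\} = -h_1$, so $\{h_0, H\} = h_1 h_2 - h_2 h_1 = 0$; thus $h_0$ is an integral as well (this is exactly the statement that $X_0$ is an infinitesimal symmetry of the \sR structure). Finally $\psi_3,\dots,\psi_8$ are integrals because $x_3,\dots,x_8$ are cyclic: $H$ written in canonical coordinates via \eqref{h1}, \eqref{h2} does not involve $x_3,\dots,x_8$, so $\dot\psi_i = -\partial H/\partial x_i = 0$ for $i\geq 3$. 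At this point all the bulleted integrals are established.

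\textbf{Stage 2: reduce to 10 functionally independent ones.} Here I would argue that the large list collapses. The Casimirs $h_6,h_7,h_8$ coincide (as functions on $T^*G$) with $\psi_6,\psi_7,\psi_8$ by the formulas for $h_i$ in Subsec.~\ref{subsec:Ham}, and $C$ is a polynomial in $h_3,h_4,h_5,h_6,h_7,h_8$, hence expressible through the $g_i$'s and $\psi_i$'s — so the Casimirs add nothing new. Likewise $\psi_3,\psi_4,\psi_5$ together with $\psi_6,\psi_7,\psi_8$ are recovered from $g_3,\dots,g_8 = -\psi_3,\dots,-\psi_8$. Thus the independent integrals are precisely $H, g_1,\dots,g_8, h_0$, which is $10$ functions. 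One should check these $10$ are genuinely functionally independent, e.g. by exhibiting a point of $T^*G$ at which the $10\times 16$ Jacobian has rank $10$; a convenient choice is a generic point, and the block structure ($g_i$ involve $\psi_i$ linearly, $h_0$ involves $x_1,x_2,x_4,x_5$, $H$ involves $\psi_1,\psi_2$) makes the rank computation transparent. This gives the algebra $I = \spann(H, g_1,\dots,g_8,h_0)$ of \eqref{I}.

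\textbf{Stage 3: the bracket table and the Abelian subalgebra.} I would compute $\{h_0, g_i\}$ using $\{h_0, g_i\} = \langle\lambda, [X_0, Y_i]\rangle$ — but more directly, since $Y_i$ is right-invariant and $X_0$ is built to mirror the rotation on the left, one uses that $X_0$ acts on the right-invariant frame by the same rotation matrix. Concretely, from the structure relations \eqref{X0X1}--\eqref{X0X6} transported to $Y_i$ via Lemma~1 (the inversion $i_*$ reverses brackets but preserves the linear rotation action), one obtains $\{h_0,g_3\}=0$, $\{h_0,g_4\}=g_5$, $\{h_0,g_5\}=-g_4$, $\{h_0,g_6\}=2g_7$, $\{h_0,g_7\}=g_8-g_6$, $\{h_0,g_8\}=-2g_7$, and $\{h_0, g_1\}, \{h_0, g_2\}$ must be checked — I expect $\{h_0,g_1\}=g_2$, $\{h_0,g_2\}=-g_1$ by symmetry, but since $g_1,g_2$ are not in the final Abelian algebra this does not matter for the statement; what matters is that $\{h_0, g_j\}=0$ for $j=3$ and that $g_3,\dots,g_8$ mutually commute (they are right-invariant Hamiltonians of the commuting fields $Y_3,\dots,Y_8$, which span an abelian ideal) and commute with $H$. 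Also $\{H, g_i\}=0$ for all $i$ from Stage 1. Hence $A = \spann(H, g_3,\dots,g_8)$ is an Abelian algebra of $7$ independent integrals, which proves \eqref{A}.

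\textbf{Main obstacle.} The conceptually delicate point — and the one I would be most careful about — is Stage 3: getting the bracket $\{h_0, g_i\}$ right, because $h_0$ is a function on $T^*G$ (not just $L^*$) while the $g_i$ are right-invariant, so one cannot simply read brackets off a structure table of left-invariant fields. The clean way is to observe that $\{h_0, g_i\} = \langle\lambda, [X_0, Y_i]\rangle$ and compute the brackets $[X_0, Y_i]$ in $\Vec(G)$ directly from the explicit formulas \eqref{X0}--\eqref{R} and the formulas for $Y_i$ in Subsec.~\ref{subsec:right}; this is a finite polynomial computation. Everything else is either an application of the earlier sections or a routine functional-independence check, so the theorem follows.
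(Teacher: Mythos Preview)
Your proposal is correct and follows essentially the same route as the paper: the paper's ``proof'' is just the bulleted discussion preceding the theorem (listing $H$, the $g_i$, $h_0$, the Casimirs, and the cyclic momenta, noting that only ten are independent, and recording the brackets \eqref{h0g4}--\eqref{h0g6}), capped by the sentence ``We proved the following statement.'' Your Stages~1--3 are a fleshed-out version of exactly that argument; the only point where you go beyond the paper is the explicit Jacobian-rank check for independence and the direct computation of $[X_0,Y_i]$ for the $\{h_0,g_i\}$ brackets, which the paper simply asserts.
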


Thus there lacks just one integral commuting with the integrals in $A$ in order to have Liouville integrability of $\vH$.

\subsection{Homogeneous integrals of $\vH$}
A natural source of integrals of $\vH$ are homogeneous polynomials in the momenta~$h_i$: \qquad
$
P_k = P_k(h_1, \dots, h_8)$, $\deg P_k = k$.
Although, for $k = 1, 2, 3$ we get no new integrals in this way, i.e., $P_1$, $P_2$, $P_3$ are expressed through the Casimir functions and the Hamiltonian $H$.

\begin{theorem}
Let a homogeneous polynomial $P_k(h_1, \dots, h_8)$ be an integral of the field $\vH$. Then:
\begin{itemize}
\item[$(1)$]
$P_1 = \sum_{i=6}^8 a_i h_i$, \qquad $a_i \in \R$,
\item[$(2)$]
$P_2 = \sum_{i, j=6}^8 a_{ij} h_i h_j + b H$, \qquad $a_{ij}, \ b \in \R$,
\item[$(3)$]
$P_3 = \sum_{i, j , l=6}^8 a_{ijl} h_i h_j h_l + H \sum_{i=6}^8 b_i h_i + a C$, \qquad $a_{ijl}, \ b_i, \ a \in \R$.
\end{itemize}
\end{theorem}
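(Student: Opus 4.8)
The plan is to turn ``$P_k$ is an integral of $\vH$'' into a finite linear‑algebra problem on $L^*=\R^8$. Since the right‑hand sides of the vertical subsystem \eqref{dh1}--\eqref{dh8} are polynomials in $h_1,\dots,h_8$ alone (no $q$–dependence), a polynomial $P=P(h_1,\dots,h_8)$ is an integral of $\vH$ if and only if $\mathcal V P=0$, where $\mathcal V\in\Vec(\R^8)$ is the vertical field
\begin{multline*}
\mathcal V = -h_2h_3\pder{}{h_1}+h_1h_3\pder{}{h_2}+(h_1h_4+h_2h_5)\pder{}{h_3}\\
+(h_1h_6+h_2h_7)\pder{}{h_4}+(h_1h_7+h_2h_8)\pder{}{h_5}.
\end{multline*}
One reads off from \eqref{dh1}--\eqref{dh8} that $\mathcal V$ is doubly homogeneous: it raises the ordinary polynomial degree by $1$, and, for the weights $w_1=w_2=1$, $w_3=2$, $w_4=w_5=3$, $w_6=w_7=w_8=4$, it raises the weighted degree by $2$. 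Hence every polynomial integral decomposes into bihomogeneous pieces that are separately integrals, so it suffices to treat $P_k$ of a fixed bidegree $(k,w)$: the integrals of that bidegree are the kernel of the linear map $\mathcal V\colon\Pi_{k,w}\to\Pi_{k+1,w+2}$, where $\Pi_{k,w}$ is the finite‑dimensional span of the monomials of ordinary degree $k$ and weighted degree $w$.

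First I would dispatch $k=1,2$. For $k=1$, $w$ runs over $\{1,2,3,4\}$; a one‑line computation gives $\ker\mathcal V=\{0\}$ on $\Pi_{1,1},\Pi_{1,2},\Pi_{1,3}$, while $\mathcal V$ vanishes identically on $\Pi_{1,4}=\spann(h_6,h_7,h_8)$, which is $(1)$. For $k=2$, $w\in\{2,\dots,8\}$; on $\Pi_{2,2}=\spann(h_1^2,h_1h_2,h_2^2)$ one computes $\mathcal V(ah_1^2+bh_1h_2+ch_2^2)=(bh_1^2+(2c-2a)h_1h_2-bh_2^2)h_3$, so the kernel is $\spann(h_1^2+h_2^2)=\spann(H)$ by \eqref{H}; $\mathcal V$ vanishes identically on $\Pi_{2,8}=\spann(h_ih_j:6\le i\le j\le 8)$; and a short direct check shows $\mathcal V$ is injective on each of $\Pi_{2,3},\dots,\Pi_{2,7}$. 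This yields $(2)$.

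For $k=3$ the weighted degree $w$ runs over $\{3,\dots,12\}$, and exactly three bidegrees contribute, each housing one family from $(3)$: on $\Pi_{3,12}=\spann(h_ih_jh_l:6\le i\le j\le l\le 8)$ the field $\mathcal V$ vanishes identically; on $\Pi_{3,6}$ the kernel is $\spann(Hh_6,Hh_7,Hh_8)$ (indeed $\mathcal V(Hh_i)=(\mathcal V H)h_i+H(\mathcal V h_i)=0$, since $\mathcal V H=0$ and $\mathcal V h_i=0$ for $i\ge6$); on $\Pi_{3,10}$ the kernel is $\spann(C)$, where $C$ is the Casimir in \eqref{Casimirs}, which lies in $\Pi_{3,10}$ and satisfies $\mathcal V C=0$. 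For the remaining weighted degrees $w\in\{3,4,5,7,8,9,11\}$ one has to show $\ker\mathcal V=\{0\}$ on $\Pi_{3,w}$. Collecting kernels over all $w$ gives the $14$‑dimensional space in $(3)$. The main obstacle is precisely this last bookkeeping: for the middle values $w=6,\dots,10$ the space $\Pi_{3,w}$ has dimension of order $15$--$20$, so checking that $\ker\mathcal V$ has \emph{exactly} the claimed dimension — not merely that it contains the obvious integrals — is a rank computation for a moderately large matrix with monomial entries.

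There is no conceptual difficulty here, and the work can be cut down substantially by exploiting the rotation symmetry. By \eqref{X0X1}--\eqref{X0X6}, $\ad_{X_0}$ is a derivation of $L$, so the induced linear field $\mathcal V_0$ on $L^*$ is a Poisson vector field that annihilates $H$; hence $[\mathcal V,\mathcal V_0]=0$. Since $\mathcal V_0$ is semisimple on $L^*\otimes\mathbb C$ with eigenvalues in $\{0,\pm i,\pm2i\}$, each $\Pi_{k,w}$ splits into eigenspaces of $\mathcal V_0$, and $\mathcal V$ preserves the eigenvalue; this block‑diagonalises every system above into pieces of small dimension, after which each case — both the explicit kernels and the injectivity statements — reduces to a short hand computation, the injectivity ones amounting to observing that a small matrix with monomial entries has full column rank.
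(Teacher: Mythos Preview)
Your proposal is correct. The underlying idea is the same as the paper's — reduce ``$P_k$ is an integral'' to a linear system by expanding $\{H,P_k\}$ in monomials and equating coefficients to zero — but the paper carries this out in a single brute-force step (spelled out only for $k=1$, with $(2)$ and $(3)$ dismissed as ``proved similarly''), whereas you impose additional structure before computing. Your weighted grading $w(h_i)\in\{1,1,2,3,3,4,4,4\}$ is exactly the Carnot grading on $L^*$, and the observation that $\mathcal V$ raises $w$ by $2$ lets you localise the kernel computation to each $\Pi_{k,w}$ separately; the further block-diagonalisation by the semisimple operator $\mathcal V_0$ coming from the rotation $X_0$ is a genuine simplification absent from the paper. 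What your approach buys is that the ``moderately large'' linear systems the paper implicitly solves for $k=2,3$ are replaced by many small ones, each treatable by hand; what the paper's approach buys is brevity of exposition, since a reader willing to trust (or reproduce) a single mechanical computation needs no new grading or symmetry discussion. Your identification of the bihomogeneous pieces housing the known integrals ($H\in\Pi_{2,2}$, $Hh_i\in\Pi_{3,6}$, $C\in\Pi_{3,10}$, products of $h_6,h_7,h_8$ in top weight) is correct, as is the commutation $[\mathcal V,\mathcal V_0]=0$ and the eigenvalue set $\{0,\pm i,\pm 2i\}$ for $\mathcal V_0$.
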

\begin{proof}
$(1)$
Let $P_1 = \sum_{i=1}^8 a_i h_i$, $a_i \in \R$, be an integral of $\vH$, then
\begin{align*}
0 &= \{H, P_1\} 
= - a_1 h_1h_3 + a_2 h_1h_3 + a_3(h_1h_4+h_2h_5) + a_4(h_1h_6+h_2h_7) \\
& \qquad\qquad\qquad +a_5(h_1h_7+h_2h_8),
\end{align*}
thus $a_1 = \dots = a_5 = 0$, so $P_1 = \sum_{i=6}^8 a_i h_i$.

Statements (2) and (3) are proved similarly.
\end{proof}

In addition to attempts to prove Liouville integrability of $\vH$, we tried also to apply noncommutative integrability theory~\cite{mish_fom}, but failed.

On the other hand, in the next subsection we present a numerical evidence of chaotic dynamics for the (reduction of) the Hamiltonian field $\vH$, which suggests thet this field is not Liouville integrable.

\subsection{Reduction of the vertical subsystem}\label{subsec:reduction}
The Hamiltonian field $\vH$ on $T^*G$ has a vertical part $\vH_{\textrm{vert}}$ defined on $L^*$ as follows (see e.g.~\cite{notes}):
$$
\vH_{\textrm{vert}}(\lam) = (\ad dH)^* \lam, \qquad \lam \in L^*.
$$
In the coordinates $(h_1, \dots, h_8)$ on $L^*$, the ODE $\dlam = \vH_{\textrm{vert}}(\lam)$ reads just as equations~\eq{dh1}--\eq{dh8}. 

For any $p = (h_6^0, h_7^0, h_8^0, C^0) \in \R^4$, consider the common level surface of the Casimir functions~\eq{Casimirs}
$$
O_p = \{\lam \in L^* \mid h_i(\lam) = h_i^0, \ i = 6, 7, 8, \ C(\lam) = C^0\}.
$$
By Corollary~\ref{cor:orbits}, in the generic case $\D^0 = h_6^0 h_8^0 - (h_7^0)^2 \neq 0$, the level set $O_p$ is an orbit of  co-adjoint action of the Lie group $G$ on $L^*$, it is 4-dimensional, and is parameterized by the coordinates $(h_1, h_2, h_4, h_5)$ as~\eq{OQ}, \eq{Q1}. In these coordinates, the restriction of the vertical subsystem $\dlam = \vH_{\textrm{vert}}(\lam)$ to $O_p$ reads as follows:
\begin{align*}
&\dot h_1 = - h_2 \ h_3(h_4,h_5), \\
&\dot h_2 = h_1 \ h_3(h_4,h_5), \\
&\dot h_4 = h_1 h_6^0 + h_2 h_7^0, \\
&\dot h_5 = h_1 h_7^0 + h_2 h_8^0, \\
&h_3(h_4,h_5) = (h_8^0 h_4^2 - 2 h_7^0 h_4h_5 + h_6^0h_5^2-C^0)/(2 \D^0).
\end{align*}
Restriction of this system to the level surface $\{H = 1/2\}$ gives, in the coordinates
\begin{gather*}
h_1 = \cos \th, \quad h_2 = \sin \th, \quad h_3 = c, \\ 
h_4 = a, \quad h_5 = b, \quad h_6 = m, \quad h_7 = p, \quad h_8 = n,
\end{gather*}
the following 3 equations:
\begin{align}
&\dot \th = (2 p ab - na^2 - mb^2)/(2 \Delta) + k, \label{red1}\\
&\dot a = m \cos \th + p \sin \th, \label{red2}\\
&\dot b = p \cos \th + n \sin \th, \qquad
m, \ n, \ p , \ k = \const. \label{red3}
\end{align}

If $\th(t)$ is increasing (or decreasing), then system~\eq{red1}--\eq{red3} defines a Poincar\`e mapping
\begin{align*}
&\map{P}{\R^2}{\R^2}, \qquad P(a,b) = (a', b'), \\
&\restr{(\th(t), a(t), b(t))}{t=0} = (0, a, b), \\
&\restr{(\th(t), a(t), b(t))}{t=T>0} = (2\pi, a', b').
\end{align*}
We computed numerically the orbits 
$
\{P^i(a,b) \mid i \in \N\}$,
and for various values of the parameters $(m,n,p,k)$ and initial points $(a,b)$, we get regular or chaotic bahaviour, see Figs.~\ref{fig:reg1}--\ref{fig:chaos6}.
This numeric evidence leads to the following

\begin{conjecture}\label{conj:nonintegr}
\begin{itemize}
\item[$(1)$]
The Hamiltonian vector field $\vH$ is not Liouville integrable on $T^*G$.
\item[$(2)$]
There exist symplectic submanifolds $S \subset T^*G$, $0 < \dim S < \dim  T^*G$, such that $\vH$ is Liouville integrable on $S$.
\end{itemize}
\end{conjecture}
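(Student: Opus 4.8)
\medskip
\noindent\emph{Towards a proof of Conjecture~\ref{conj:nonintegr}.}
Since this is a conjecture, I describe the strategy I would pursue rather than a complete argument.

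\textbf{Part~(1).} The plan is to invoke the Morales--Ramis differential Galois obstruction to meromorphic Liouville integrability (equivalently, Ziglin's monodromy theorem): were $\vH$ to possess a complete involutive family of meromorphic integrals, then along every solution of $\dlam = \vH(\lam)$ the identity component of the differential Galois group of the normal variational equation would be abelian. Concretely: (i)~exhibit an explicit particular solution of the vertical subsystem~\eqref{dh1}--\eqref{dh8}; a convenient one lives on the locus $\D = h_6 h_8 - h_7^2 < 0$, where the form $h_6 v_1^2 + 2 h_7 v_1 v_2 + h_8 v_2^2$ has a real null direction --- taking $(h_1,h_2)$ along it, $h_3 \equiv 0$, and $(h_4,h_5)$ affine in $t$ yields a solution of~\eqref{dh1}--\eqref{dh8} depending rationally on $t$, so the variational equation has rational coefficients; (ii)~use the known integrals --- $H$~\eqref{H}, the Casimirs~\eqref{Casimirs}, the right-invariant $g_i$ --- to split off the trivial factors of the variational system and isolate its genuinely normal part; (iii)~show that for an open set of parameters $(h_6^0, h_7^0, h_8^0, C^0, H)$ the identity component of the Galois group of that normal part is non-abelian, contradicting integrability. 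An alternative route, valid when the co-adjoint orbit is far from the degenerate locus, treats $1/\D$ as small: the reduced system~\eqref{red1}--\eqref{red3} then decouples to leading order into the integrable $\dot\th = k$, $\dot a = m\cos\th + p\sin\th$, $\dot b = p\cos\th + n\sin\th$, and one carries out a Poincar\'e--Melnikov computation of the splitting of the resonant invariant tori, verifying that the relevant Fourier coefficients do not vanish so that Poincar\'e's theorem excludes an extra analytic integral. In both routes the numerical Poincar\'e sections of~\eqref{red1}--\eqref{red3} shown above --- with their coexisting invariant curves and stochastic layers --- are the heuristic that such an obstruction is present; they are evidence, not proof.

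\textbf{Part~(2).} Here the plan is constructive. The most transparent invariant locus is $\{h_6 = h_7 = h_8 = 0\}$: on it $h_4, h_5$ are constants of motion of~\eqref{dh1}--\eqref{dh8}, and writing $h_1 + i h_2 = \sqrt{2H}\,e^{i\varphi}$, $h_4 + i h_5 = \rho\,e^{i\varphi_0}$ gives $\dot\varphi = h_3$, $\dot h_3 = \sqrt{2H}\,\rho\cos(\varphi - \varphi_0)$, the pendulum equation --- precisely the (integrable) vertical subsystem of the \sR problem on the Cartan group with growth vector $(2,3,5)$. Lifted to $T^*G$ (where $h_i = \p_i$ for $i = 6,7,8$) this is the coisotropic locus $\{\p_6 = \p_7 = \p_8 = 0\}$, whose symplectic reduction by the translations $\pder{}{x_6}, \pder{}{x_7}, \pder{}{x_8}$ is $T^*$ of the Cartan group, on which $\vH$ descends to the Liouville-integrable Cartan flow; one then takes for the conjectured $S$ this reduced symplectic manifold, with commuting integrals assembled from the pendulum energy together with the restrictions of the Casimirs and of the $g_i$. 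A second family of candidates arises from special values of the Casimirs --- for instance $h_6^0 = h_8^0$, $h_7^0 = 0$ --- for which~\eqref{red1}--\eqref{red3} acquires a skew-product structure over a planar subsystem and becomes integrable.

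\textbf{Main obstacle.} For part~(1) the crux is step~(iii): even with a rational particular solution, the normal variational equation is a linear system of order greater than two, for which no off-the-shelf algorithm computes the differential Galois group; one must reduce it using the variations induced by the known integrals, analyse the surviving irreducible factor, and then certify that its identity component is non-abelian on an \emph{open} parameter set rather than merely generically --- special values of $C^0$ or of the $h_i^0$ could collapse the group, so higher-order (Morales--Ramis--Sim\'o) variational systems may be needed to rule them out, and numerics cannot substitute for this. For part~(2) the obstacle is complementary: the manifestly $\vH$-invariant loci tend to be coisotropic, not symplectic, so the literal demand for an \emph{embedded} symplectic $S$ may have to be realised through a Marsden--Weinstein-type reduction, after which one must still verify that the full involutive family genuinely descends to, and stays independent on, the reduced space.
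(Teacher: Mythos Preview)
The paper contains no proof of this statement: it is explicitly labelled a conjecture, and the only support offered is the numerical Poincar\'e sections of the reduced system~\eqref{red1}--\eqref{red3} (Figs.~\ref{fig:reg1}--\ref{fig:chaos6}), which exhibit coexisting regular and chaotic orbits. So there is nothing to compare your argument against; you have correctly recognised that no proof exists and have instead sketched a programme.

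Your programme goes substantially beyond the paper. For part~(1), the paper stops at ``numerics suggest chaos''; you propose the Morales--Ramis/Ziglin route and a Poincar\'e--Melnikov alternative, both of which are the standard rigorous tools one would reach for. Your particular solution on the locus $\Delta<0$ is well chosen: with $(h_1,h_2)$ constant along a null direction of the form $h_6 v_1^2+2h_7 v_1 v_2+h_8 v_2^2$ one indeed gets $h_3\equiv 0$ and $(h_4,h_5)$ affine in~$t$, so the variational equation has rational coefficients. Your own caveat about step~(iii) is the honest one --- the normal variational system is not second order, and the known integrals must be used carefully to reduce it before any Galois computation is tractable. For part~(2), your identification of $\{h_6=h_7=h_8=0\}$ with the Cartan $(2,3,5)$ problem matches what the paper observes in Subsec.~5.4 (the pendulum/elastica reduction), though the paper does not connect that observation back to the conjecture. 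Your remark that this locus is coisotropic rather than symplectic, so that the literal statement of part~(2) may require passing to a Marsden--Weinstein quotient, is a genuine subtlety the paper does not address.

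In short: there is no gap relative to the paper, because the paper attempts nothing. Your outline is a reasonable research plan, with the difficulties you flag being the real ones.
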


\onefiglabelsizen
{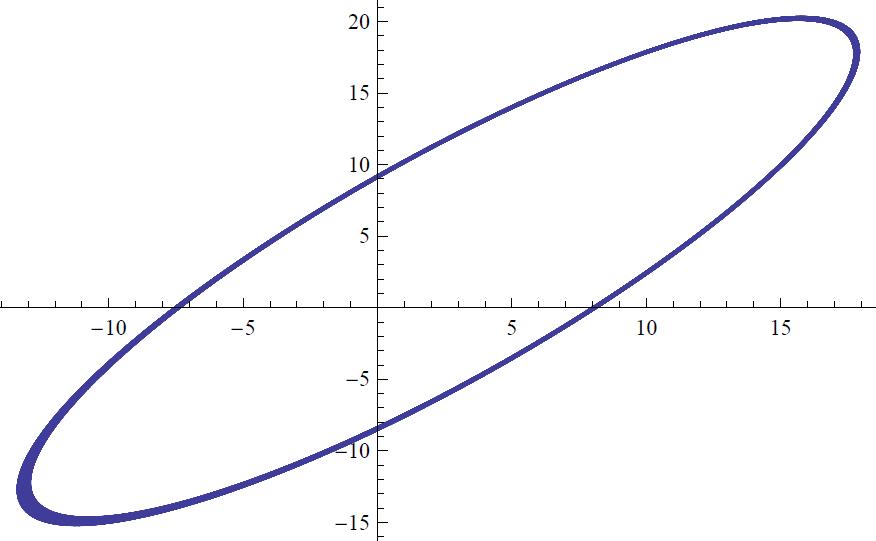}
{Regular orbit of Poincar\'e map ($5 \cdot 10^5$ points)}  
{fig:reg1}
{5}

\onefiglabelsizen
{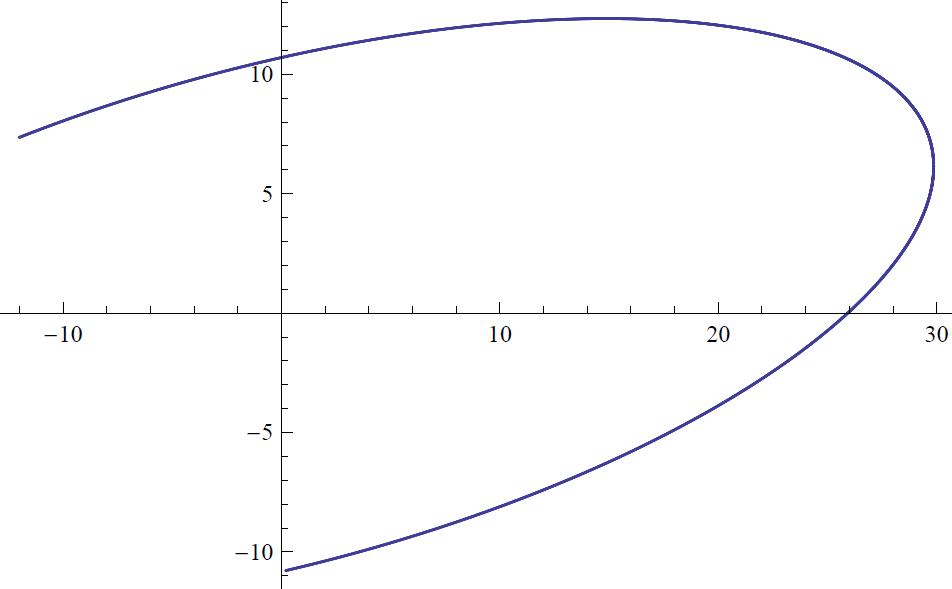}
{Regular orbit of Poincar\'e map ($5 \cdot 10^5$ points)}  
{fig:reg2}
{5}

\onefiglabelsizen
{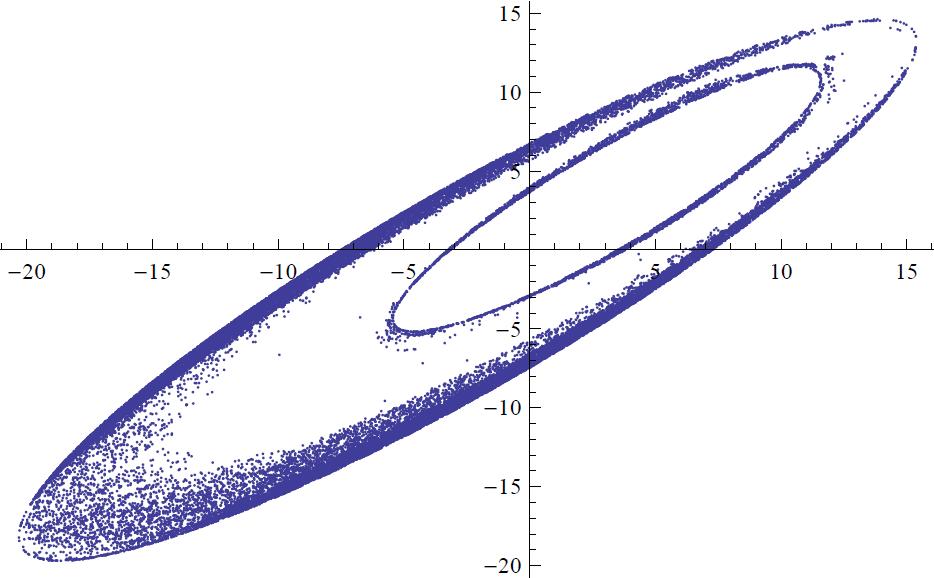}
{Chaotic orbit of Poincar\'e map ($ 5 \cdot 10^6$ points)}  
{fig:chaos1}
{5}

\onefiglabelsizen
{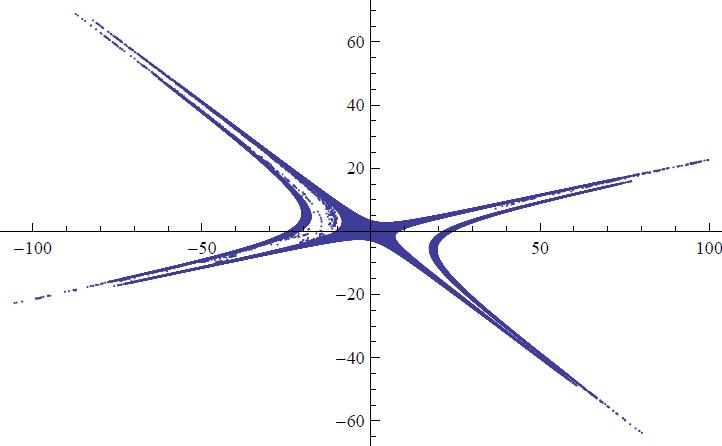}
{Chaotic orbit of Poincar\'e map ($ 5 \cdot 10^5$ points)}  
{fig:chaos5}
{5}

\onefiglabelsizen
{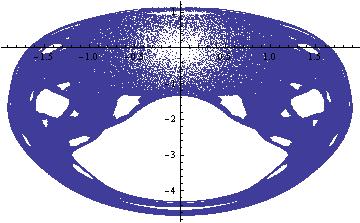}
{Chaotic orbit of Poincar\'e map ($ 5 \cdot 10^5$ points)}  
{fig:chaos2}
{5}

\onefiglabelsizen
{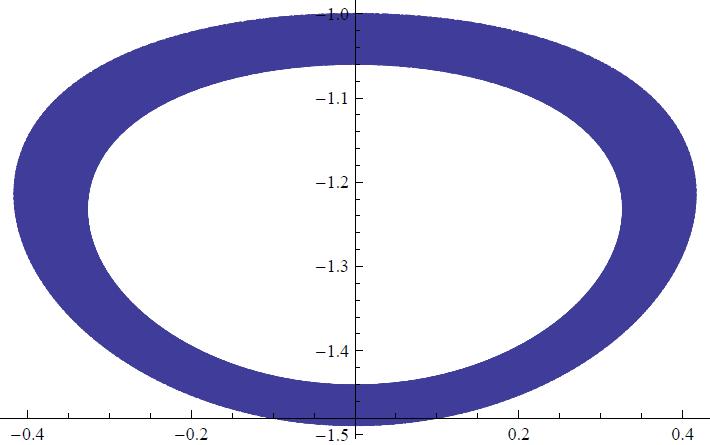}
{Chaotic orbit of Poincar\'e map ($ 5 \cdot 10^5$ points)}  
{fig:chaos6}
{5}

\subsection{Lower-dimensional projections}
For special initial values of $\lam \in L^*$, projections of normal geodesics $q(t)$ of the $(2,3,5,8)$-problem to certain subspaces of the state space $\R^8$ yield geodesics of lower-dimensional sub-Riemannian problems since there is an obvious nested chain of nilpotent SR problems on Carnot groups, like Russian Matryoshka:
$$
(2) \subset (2,3) \subset (2,3,5 ) \subset (2,3,5,8),
$$
corresponding to the chain of subspaces:
$$
\R^2_{x_1x_2} \subset \R^3_{x_1x_2x_3} \subset \R^5_{x_1\dots x_5} \subset \R^8_{x_1\dots x_8}. 
$$
Multiplication table in the Heisenberg algebra (growth vector $(2,3)$) is
\be{23}
[X_1, X_2] = X_3, 
\ee
and in the Cartan algebra (growth vector $(2,3,5)$) is
\be{235}
[X_1, X_2] = X_3, \quad [X_1, X_3] = X_4, \quad [X_2, X_3] = X_5.
\ee
Multiplication tables~\eq{23} and~\eq{235}
are depicted resp. in Figs.~\ref{fig:23} and~\ref{fig:235} (compare with Fig.~\ref{fig:2358} for the (2,3,5,8) Carnot algebra).

\begin{figure}[htb]
\setlength{\unitlength}{1cm}

\begin{center}
\begin{picture}(4, 4)
\put(1.15, 1.9){ \vector(1, -1){0.8}}
\put(2.85, 1.9){ \vector(-1, -1){0.8}}


\put(1, 1.98) {$X_1$}
\put(3, 1.98) {$X_2$}
\put(2, 0.75) {$X_3$}

\end{picture}

\caption{The Heisenberg algebra \label{fig:23}}
\end{center}
\end{figure}
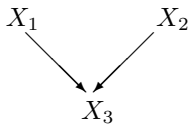

\begin{figure}[htb]
\setlength{\unitlength}{1cm}

\begin{center}
\begin{picture}(4, 4)
\put(1.15, 2.9){ \vector(1, -1){0.8}}
\put(1, 2.9){ \vector(0, -1){2}}
\put(2.85, 2.9){ \vector(-1, -1){0.8}}
\put(3, 2.9){ \vector(0, -1){2}}
\put(1.9, 1.65){ \vector(-1, -1){0.8}}
\put(2.1, 1.65){ \vector(1, -1){0.8}}


\put(1, 0.5) {$X_4$}
\put(3, 0.5) {$X_5$}
\put(1, 2.98) {$X_1$}
\put(3, 2.98) {$X_2$}
\put(2, 1.75) {$X_3$}

\end{picture}

\caption{The Cartan algebra \label{fig:235}}
\end{center}
\end{figure}
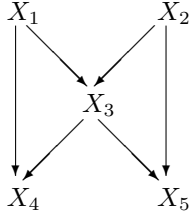

If $h_3(\lam) = \dots = h_8(\lam) = 0$, then $(x_1(t), x_2(t))$ is a Riemannian geodesic in the Euclidean plane $\R^2_{x_1x_2}$, i.e., a straight line.

If $h_4(\lam) = \dots = h_8(\lam) = 0$, then $(x_1(t), x_2(t), x_3(t))$ is a sub-Riemannian geodesic in the Heisenberg group $\R^3_{x_1x_2x_3}$, thus the curve $(x_1(t), x_2(t))$ is a straight line or a circle~\cite{versh_gersh, brock}.

If $h_6(\lam) = h_7(\lam)  = h_8(\lam) = 0$, then $(x_1(t), \dots, x_5(t))$ is a sub-Riemannian geodesic in the Carnot group $\R^5_{x_1\dots x_5}$, thus the curve $(x_1(t), x_2(t))$ is an Euler elastica --- a  stationary configuration of elastic rod in the plane~\cite{euler, love, dido_exp, max1, max2, max3, el_max, el_conj, el_exp}, see the plots of elasticae for various values of elastic energy at Figs.~\ref{fig:el1}--\ref{fig:el4}.

\twofiglabelsize
{elastica4}{Inflexional elastica}{fig:el1}
{elastica5}{Inflexional elastica}{fig:el2}
{0.4}{0.4}

\twofiglabelsize
{elastica6}{Inflexional elastica}{fig:el3}
{elastica8}{Non-inflexional elastica}{fig:el4}
{0.4}{0.4}

For generic $\lam \in L^*$, the curves $(x_1(t), x_2(t))$ look like ``elasticae of variable elastic energy'', see Figs.~\ref{fig:gen_el1}, \ref{fig:gen_el2}.

\onefiglabelsizen
{gen_el1}
{Elastica of variable elastic energy}  
{fig:gen_el1}
{4.5}

\onefiglabelsizen
{gen_el2}
{Elastica of variable elastic energy}  
{fig:gen_el2}
{4.}

There is an obvious relation of optimality of trajectories of the (2,3,5,8)-problem and its lower-dimensional projections due to the following simple statement.

\begin{proposition}[\cite{engel_conj}]
\label{prop:project}
Consider two optimal control problems:
\begin{eqnarray*}
&& \dot{q}^i = f^i(q^i, u),\quad q^i \in M^i, \quad u \in U, \\
&&q^i(0)= q^i_0, \quad q^i(t_1)= q^i_1,\\
&&J= \int^{t_1}_0 \varphi(u) \,dt \rightarrow \min,\\
&& i=1,2.
\end{eqnarray*}
Suppose that there exists a smooth map $G:M^1 \rightarrow M^2$, s.~t. if $q^1(t)$ is the trajectory of the first system corresponding to a control $u(t)$, then $q^2(t)= G(q^1(t))$ is the trajectory of the second system with the same control.

Further assume that $q^1(t)$ and $q^2(t)$ are such trajectories. If $q^2(t)$ is locally (globally) optimal for the second problem, then $q^1(t)$ is locally (globally) optimal for the first problem.
\end{proposition}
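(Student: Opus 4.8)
I would prove this as a ``pull-back of optimality'' argument: handle global optimality first, via the contrapositive, and then upgrade to the local statement by a continuity argument. The one structural fact I would lean on throughout is that the cost $J=\int_0^{t_1}\varphi(u)\,dt$ depends only on the control, not on the trajectory, so it takes the \emph{same} value on $q^1(\cdot)$ and on $q^2(\cdot)=G(q^1(\cdot))$, and more generally on any trajectory of the first system and its $G$-image, since by the hypothesis these two are driven by one and the same control. Note also that the transport hypothesis is assumed for \emph{all} controls, not just for the distinguished pair $q^1,q^2$, which is exactly what makes competitors transportable.

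For the \emph{global} case I would argue by contradiction. If $q^1(\cdot)$ were not globally optimal for the first problem, there would be an admissible trajectory $\tilde q^1(\cdot)$ of the first system, corresponding to some control $\tilde u(\cdot)$, with $\tilde q^1(0)=q^1_0$, $\tilde q^1(t_1)=q^1_1$ and $J(\tilde u)<J(u)$. Put $\tilde q^2(\cdot):=G(\tilde q^1(\cdot))$. By hypothesis $\tilde q^2(\cdot)$ is a trajectory of the second system with the same control $\tilde u(\cdot)$; and since $G$ is a well-defined map, $\tilde q^2(0)=G(q^1_0)=q^2_0$ and $\tilde q^2(t_1)=G(q^1_1)=q^2_1$. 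Hence $\tilde q^2(\cdot)$ is an admissible competitor for the second problem with $J(\tilde u)<J(u)$, contradicting the global optimality of $q^2(\cdot)$. Therefore $q^1(\cdot)$ is globally optimal.

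For the \emph{local} case I would first fix the topology in which ``locally optimal'' is understood (say the $C^0$-topology on trajectories, equivalently the $L^\infty$-topology on admissible controls with fixed endpoints). Local optimality of $q^2(\cdot)$ provides a neighbourhood $\mathcal{U}^2$ of it such that no admissible competitor joining $q^2_0$ to $q^2_1$ and lying in $\mathcal{U}^2$ has cost below $J(u)$. Since $G$ is smooth, hence continuous, the induced map $\tilde q^1(\cdot)\mapsto G\circ\tilde q^1(\cdot)$ is continuous in this topology, so I can pick a neighbourhood $\mathcal{U}^1$ of $q^1(\cdot)$ whose image under this map lies in $\mathcal{U}^2$. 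Running the contradiction argument of the previous paragraph, but restricted to competitors $\tilde q^1(\cdot)\in\mathcal{U}^1$, now shows that no such competitor joining $q^1_0$ to $q^1_1$ beats $J(u)$, i.e. $q^1(\cdot)$ is locally optimal.

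The computations here are trivial; the only step requiring care is making the notion of ``local optimality'' precise and verifying that $G$ transports the relevant neighbourhoods in the chosen topology. This is exactly where the smoothness (continuity) of $G$ enters, and one may have to shrink $\mathcal{U}^1$ so that its image genuinely sits inside the prescribed $\mathcal{U}^2$. I expect that bookkeeping — not any analytic difficulty — to be the main and essentially only obstacle.
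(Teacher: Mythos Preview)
The paper does not actually prove this proposition; it is quoted from \cite{engel_conj} and used as a black box, so there is no ``paper's own proof'' to compare against. Your argument is correct and is the standard one: global optimality transports by contraposition because the cost $J=\int_0^{t_1}\varphi(u)\,dt$ depends only on the control, and the hypothesis on $G$ carries any admissible competitor for problem~1 (with endpoints $q^1_0,q^1_1$) to an admissible competitor for problem~2 (with endpoints $q^2_0=G(q^1_0)$, $q^2_1=G(q^1_1)$) having the same cost; local optimality then follows by continuity of $G$, which pulls back the neighbourhood witnessing local optimality of $q^2(\cdot)$ to a neighbourhood of $q^1(\cdot)$. The only point one might sharpen is to state explicitly which notion of local optimality is meant (weak $C^0$, strong $L^\infty$ on controls, etc.), but your remark that this is bookkeeping rather than a genuine obstacle is accurate.
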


This proposition provides lower bounds for the cut time
$$
\tcut(\lambda) = \sup \{ t>0 \mid \pi \circ e^{s \vH} (\lambda) \text{ is globally optimal for } s \in [0,t]\}
$$
and the first conjugate time
$$
\tconj(\lambda)=\sup \left\{t>0 \mid \pi \circ e^{s \vH} (\lambda) \text{ is locally optimal for } s \in [0,t] \right\}
$$
of the (2,3,5,8)-problem in terms of the same functions for its lower-dimensional projections.

For the Riemannian problem on the plane, the straight lines are optimal forever, so the cut and first conjugate times are $+ \infty$, thus for the (2,3,5,8)-problem
$$
h_3(\lam) = \dots h_8(\lam) = 0 \quad \then \quad 
\tcut(\lam) = \tconj(\lam) = + \infty.
$$
For the sub-Riemannian problem on the Heisenberg group, the circles are locally and globally optimal up to the first loop, thus for the (2,3,5,8)-problem
$$
h_3(\lam) \neq 0,  \quad h_4(\lam) =  \dots = h_8(\lam) = 0 \quad \then \quad 
\tconj(\lam) \geq \tcut(\lam) \geq \frac{2\pi}{|h_3(\lam)|}.
$$
Similar, but much more complicated bounds hold for the case $h_6(\lam) =  h_7(\lam) = h_8(\lam) = 0$ via comparison with the cut and first conjugate times for the sub-Riemannian problem on the Cartan group~\cite{dido_exp, max1, max2, max3}.

\section{Conclusion}\label{sec:concl}
We see the following interesting questions for the (2,3,5,8)-problem:
\begin{enumerate}
\item
study optimality of abnormal geodesics,
\item
describe all cases where the normal Hamiltonian vector field $\vH$ is Liouville intergable, integrate and study the corresponding normal geodesics,
\item
describe precisely the chaotic dynamics of the normal Hamiltonian vector field $\vH$.
\end{enumerate}

We plan to address these questions in forthcoming works.


\begin{thebibliography}{99}
 \bibitem{notes}
A.A.~Agrachev, Yu. L. Sachkov,  
{\em Geometric control theory}, Fizmatlit, Moscow 2004; English transl.
{\em Control Theory from the Geometric Viewpoint},
Springer-Verlag, Berlin 2004.

\bibitem{agrachev_sarychev}
A.\,A.~Agrachev, A.\,A.~Sarychev,
Filtration of a Lie algebra of vector fields and nilpotent
approximation of control systems,
{\em Dokl. Akad. Nauk SSSR}, {\bf 295} (1987),
English transl. in {\em Soviet Math. Dokl.}, {\bf 36} (1988),
104--108.

\bibitem{engel_conj}
A.Ardentov, Yu. Sachkov,
Conjugate points in nilpotent sub-Riemannian problem on the Engel group,
{\em Journal of Mathematical Sciences}, 
Vol. 195, No. 3, December, 2013, 369--390.

\bibitem{arnold_mech}
V.I. Arnold,  {\em Mathematical Methods of Classical Mechanics}, Springer, 1997.

 \bibitem{bellaiche}
A. Bella\"iche, The tangent space in sub-Riemannian geometry, {\em in: Sub-Riemannian geometry},
vol. 144 of Progr. Math., Birkh\"auser, Basel, 1996, pp. 1–78.

\bibitem{brock}
R.~Brockett,
Control theory and singular Riemannian geometry,
In: {\em New Directions in Applied Mathematics}, (P.~Hilton and G.~Young eds.),
Springer-Verlag, New York, 11--27.

\bibitem{mont}
R. Montgomery, 
{\em A Tour of Subriemannian Geometries, Their Geodesics
and Applications}. 
American Mathematical Society (2002).

\bibitem{monti}
R.Monti, The regularity problem for sub-Riemannian geodesics, {\em Geometric Control Theory and
sub-Riemannian Geometry}, Springer, INdAM Series 5 (2013), G. Stefani, U. Boscain, J.-P.
Gauthier, A. Sarychev, M. Sigalotti (eds.).


 \bibitem{PBGM}
L.S.~Pontryagin, V.G.~Boltyanskii, R.V.~Gamkrelidze, E.F.~Mishchenko, 
{\em The mathematical theory of optimal processes}, Wiley Interscience, 1962. 

 \bibitem{reutenauer}
Ch. Reutenauer,   
{\em Free Lie algebras},
London Mathematical Society Monographs. New Series, 7, The Clarendon Press Oxford University Press, 1993.


\bibitem{euler}
L.Euler,
{\em Methodus inveniendi lineas curvas maximi minimive proprietate gaudentes, sive Solutio problematis isoperimitrici latissimo sensu accepti}, Lausanne, Geneva, 1744.

\bibitem{grayson_grossman1}
M. Grayson, R. Grossman,
Nilpotent Lie algebras and vector fields,
{\em Symbolic Computation: Applications to Scientific Computing}, R.Grossman, Ed., SIAM, Philadelphia, 1989, pp. 77--96.


\bibitem{gromov}
M. Gromov, Carnot-Carath\'eodory spaces seen from within, {\em in: Sub-Riemannian geometry},
vol. 144 of Progr. Math., Birkh\"auser, Basel, 1996, pp. 79–-323.

\bibitem{hall}
M. Hall, A basis for free Lie rings and higher commutators in free groups,
{\em Proc. Amer. Math. Soc.}, 1 (1950), 575–-581.

\bibitem{hermes} 
H. Hermes, Nilpotent approximations of control systems and distributions,
{\em SIAM J. Control Optim.}, 24 (1986), 731–-736.

\bibitem{kirillov}
A. A. Kirillov,   {\em Lectures on the orbit method}, Graduate Studies in Mathematics 64, Providence, RI: American Mathematical Society, (2004).

\bibitem{love}
A.E.H.Love,
{\em A Treatise on the Mathematical Theory of Elasticity},
4th ed., New York: Dover, 1927.

\bibitem{marsden_ratiu}
J. Marsden, T. Ratiu, 
{\em Introduction to Mechanics and Symmetry}, Springer,  1999.
 
\bibitem{mish_fom}
A. S. Mishchenko, A. T. Fomenko,
Generalized Liouville method of integration of Hamiltonian systems,
{\em Functional Analysis and Its Applications}, 
12 (1978), No. 2, pp 113--121.

\bibitem{mitchell}
J. Mitchell, On Carnot-Carath\'eodory metrics, {\em J. Differential Geom.}, 21 (1985), pp. 35–-45.

\bibitem{dido_exp}
Yu. L. Sachkov, Exponential mapping in generalized Dido's problem, {\em Mat. Sbornik},  194  	(2003),  9:  63--90 (in Russian).
English translation in: {\em Sbornik: Mathematics}, {\bf 194} (2003).

\bibitem{max1}
Yu. L. Sachkov, Discrete symmetries in the generalized Dido problem (in Russian),
{\em Matem. Sbornik},
{\bf 197}  (2006),  2:  95--116.
English translation in: {\em Sbornik: Mathematics}, {\bf 197} (2006), 2: 235--257.

\bibitem{max2}
Yu. L. Sachkov, The Maxwell set in the generalized Dido problem (in Russian),
{\em Matem. Sbornik},
{\bf 197}  (2006),  4:  123--150.
English translation in: {\em Sbornik: Mathematics}, {\bf 197} (2006), 4: 595--621.

\bibitem{max3}
Yu. L. Sachkov, Complete description of the Maxwell strata in the generalized Dido problem (in Russian),
{\em Matem. Sbornik},
{\bf 197}  (2006),  6:  111--160.
English translation in: {\em Sbornik: Mathematics}, {\bf 197} (2006), 6: 901--950.

\bibitem{el_max}
Yu.~L.~Sachkov, Maxwell strata in the Euler elastic problem. \textit{J.
Dynam. Control Systems} \textbf{14} (2008), No.~2, 169--234.

\bibitem{el_conj}
Yu.Sachkov,
Conjugate points in Euler's elastic problem,
{\em Journal of Dynamical and Control Systems}, 2008 Vol. 14 (2008), No. 3 (July), 409--439.

\bibitem{el_exp}
Yu.Sachkov, E. F. Sachkova,
Exponential mapping in Euler's elastic problem,
{\em Journal of Dynamical and Control Systems}, Vol. 21 (2014), in print.

\bibitem{2358_abnorm}
Yu.Sachkov, E. F. Sachkova,
Abnormal extremals in the $(2,3,5,8)$ sub-Riemannian problem, {\em submitted}.

\bibitem{sussmann_liu}
H.Sussmann, W.Liu, Shortest parts for sub-Riemannian metrics on rank-2 distributions, {\em
 Memoirs of the American Mathematical Society}, No. 564, Vol. 118, November 1995.

\bibitem{versh_gersh}
A.M.~Vershik, V.Y.~Gershkovich, Nonholonomic Dynamical Systems. Geometry of
distributions and variational problems. (Russian) In: {\em Itogi Nauki i Tekhniki:
Sovremennye Problemy Matematiki, Fundamental'nyje Napravleniya}, Vol.~16, VINITI,
Moscow, 1987, 5--85. (English translation in: {\em Encyclopedia of Math. Sci.} {\bf
16}, Dynamical Systems 7, Springer Verlag.)
\end{thebibliography}
\end{document}